\theoremstyle{plain}
\newtheorem{theorem}{Theorem}
\newtheorem{corollary}[theorem]{Corollary}
\newtheorem{lemma}[theorem]{Lemma}
\newtheorem{conjecture}[theorem]{Conjecture}
\theoremstyle{definition}
\newtheorem{definition}[theorem]{Definition}
\def\ra{\rightarrow}
\def\ba{\begin{array}}
\def\ea{\end{array}}
\def\bi{\begin{itemize}}
\def\ei{\end{itemize}}
\def\mP{\mathbb{P}}
\def\mR{\mathbb{R}}
\def\mZ{\mathbb{Z}}
\def\m1{\mathbbm{1}}
\def\cN{\mathcal{N}}
\def\cL{\mathcal{L}}
\def\eps{\varepsilon}
\let\oldhat\hat
\renewcommand{\hat}[1]{\oldhat{\mathbf{#1}}}
\begin{document}
\title{Analysis of a non-reversible Markov chain speedup by a single edge}
\author{Bal\'azs Gerencs\'er\thanks{B. Gerencs\'er is with the Alfr\'ed R\'enyi Institute
    of Mathematics, Hungarian Academy of Sciences and
the ELTE E\"otv\"os Lor\'and University, Department of Probability Theory and Statistics,
  {\tt\small gerencser.balazs@renyi.mta.hu}. His work is supported by NKFIH (National Research, Development and Innovation Office) grant PD 121107 and KH 126505.}%
}
\date{\today}

\maketitle

\begin{abstract}
  We present a Markov chain example where non-reversibility and an
  added edge jointly improve mixing time: when a random edge is added
  to a cycle of $n$ vertices and a Markov chain with a drift is
  introduced, we get a mixing time of $O(n^{3/2})$ with probability
  bounded away from 0. If only one of the two modifications were
  performed, the mixing time would stay $\Omega(n^2)$.
\end{abstract}



\section{Introduction}

The fundamentals of Markov chain theory is well established, but it is
still in constant development due to diverse motivations from
applications and inspiring sparks from novel discoveries
\cite{aldous-fill-2014}, \cite{levin2017markov}.
Understanding mixing gives an insight on the macroscopic behavior of the
dynamics of the chain, moreover it is also a crucial factor determining the
efficiency of applications built using the chain.
The Markov chain Monte Carlo approach is one of the popular scheme to
translate mixing of Markov chains into powerful methods for sampling
or numerical integration \cite{diaconis:montecarlo_broadsurvey2009}.

Simple examples realizing new phenomena, either awaited or surprising,
help the community get a deeper understanding on what and how is
possible. The aim of the current paper is to present and discuss such
an
example.

The starting point is the cycle with $n$ vertices, see Figure
\ref{fig:korall} (a), where we consider the 
Markov chains with uniform stationary distribution. For reversible Markov chains (meaning that the stationary
frequency of transition along every edge is the same in the two
directions) standard results show that the mixing time is $\Omega(n^2)$.

\begin{figure}[h]
  \centering
  \hspace{0.05\textwidth}
  \subfloat[ ]{
    \includegraphics[width=0.23\textwidth]{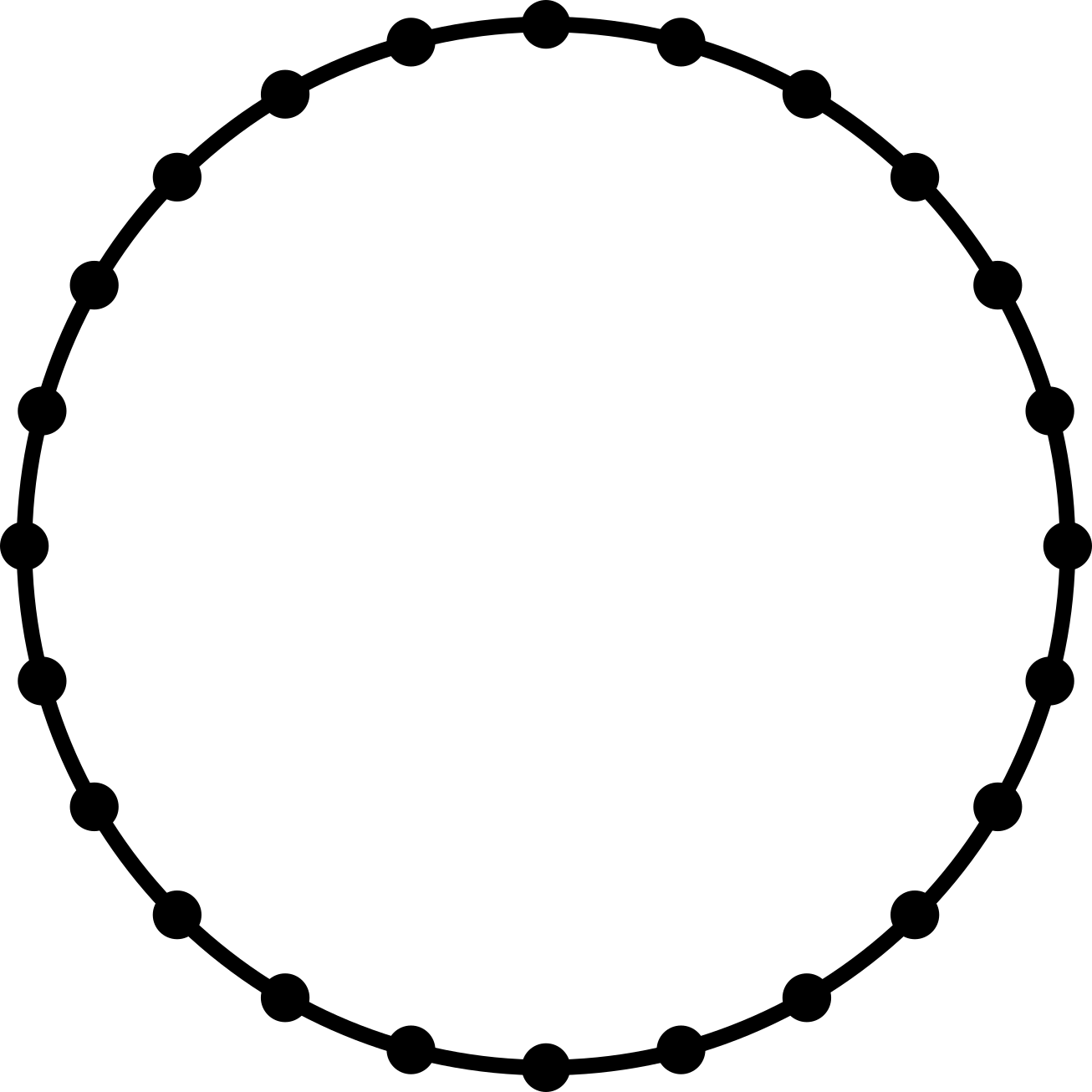}
  }
  \hspace{0.05\textwidth}
  \subfloat[ ]{
    \includegraphics[width=0.23\textwidth]{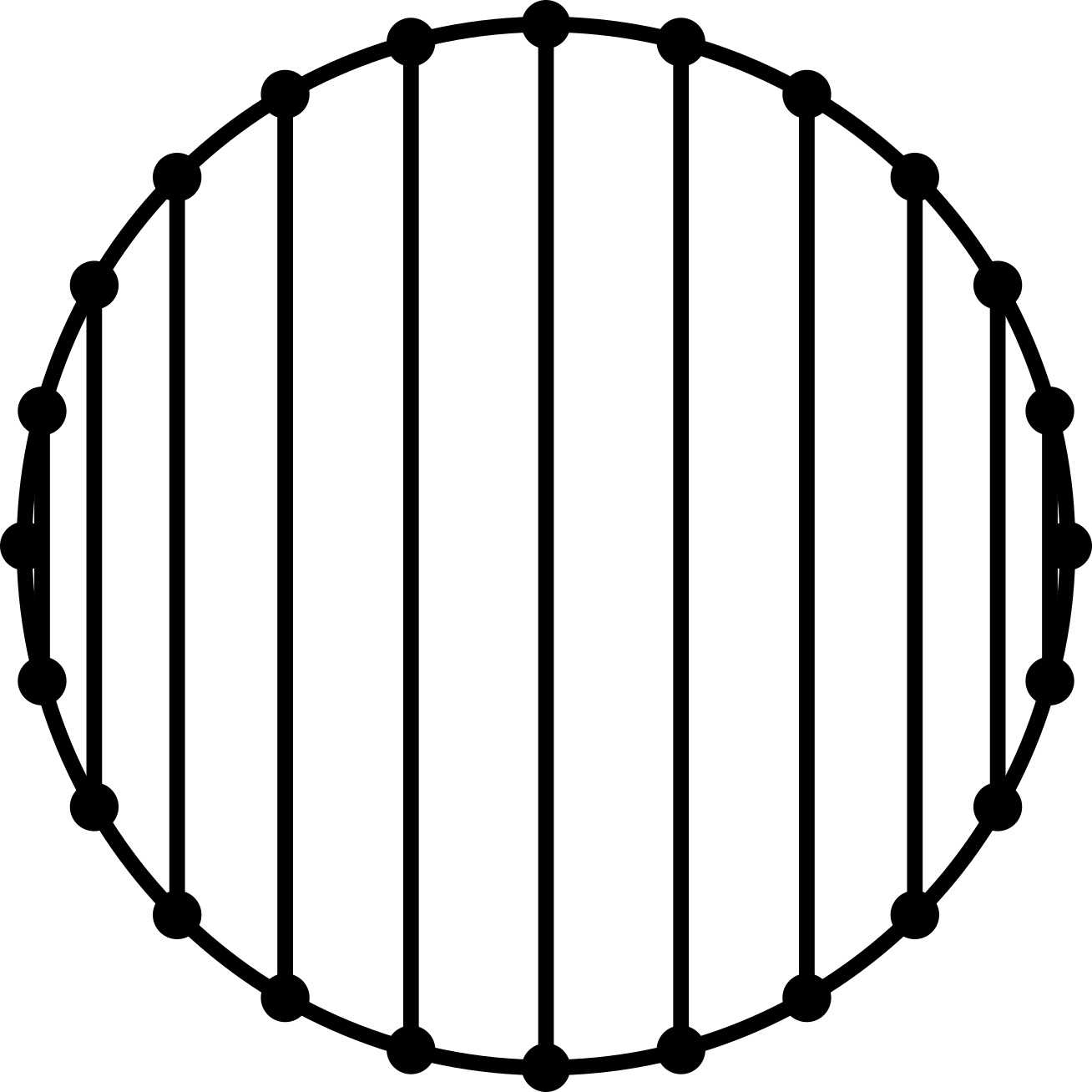}
  }
  \hspace{0.05\textwidth}
  \subfloat[ ]{
    \includegraphics[width=0.23\textwidth]{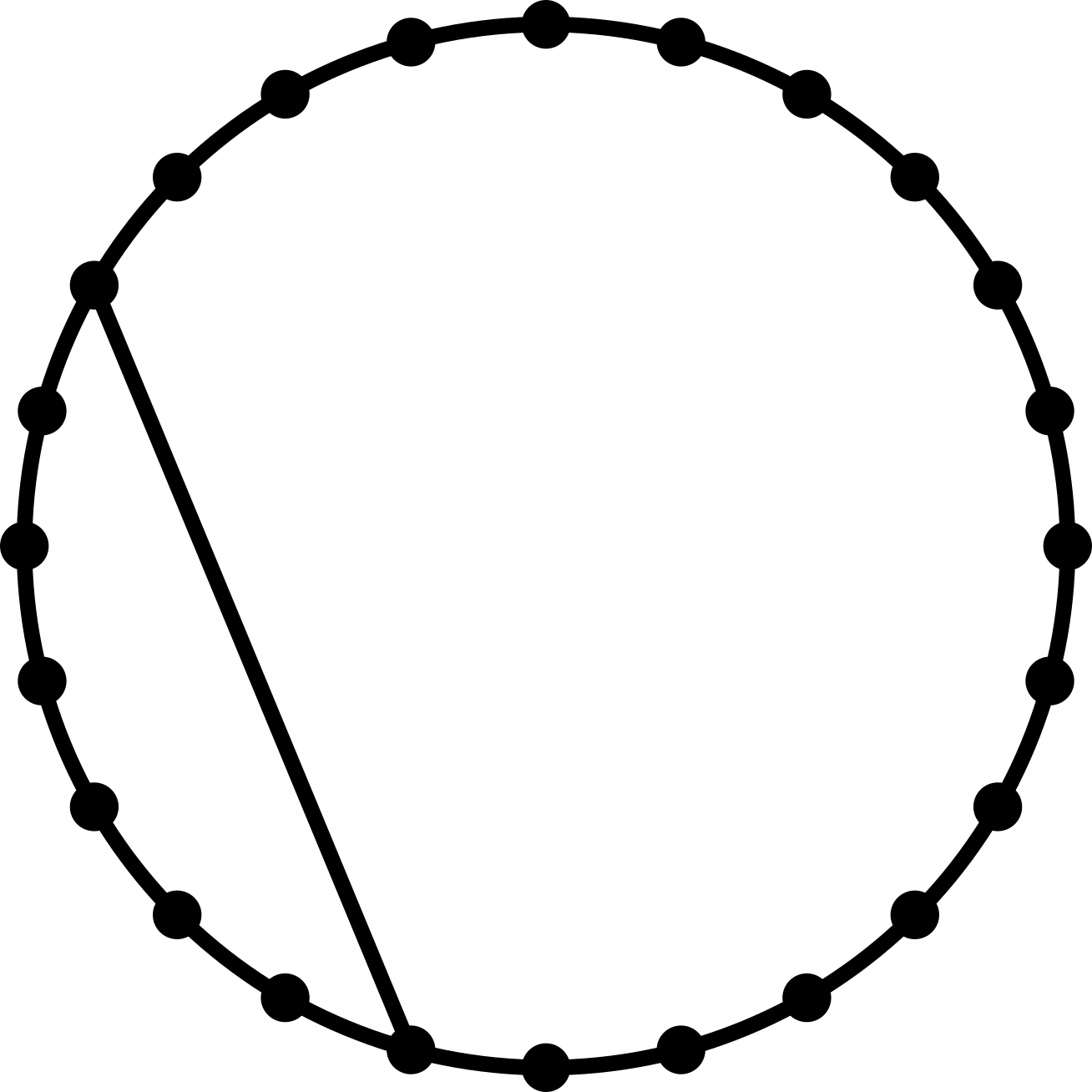}
  }
  \hspace{0.05\textwidth}
  \caption{Cycle graph with either (a) no extra edges; (b) all parallel edges
    between opposing vertices; (c) a single random extra edge.}
  \label{fig:korall}
\end{figure}

Relaxing the reversibility
condition does not help in this simple case. An important observation is that we only get
a single extra degree of freedom, a possible \emph{drift}: we may increase all
clockwise and decrease all counter-clockwise transition probabilities
by the same amount
departing from a reversible transition structure. It is a surprisingly non-trivial
result of the author \cite{gb:ringmixing2011} that the mixing time is still $\Omega(n^2)$.

Still striving for faster mixing, we may analyze a graph with additional
edges. It was a key revelation of Diaconis, Holmes and Neal \cite{diaconis:nonrev2000} that
when edges connecting opposing vertices are available, as in Figure
\ref{fig:korall} (b) \emph{and} a strong drift is used,
the mixing time drops to $O(n)$ (this is a slight reinterpretation of
their context).

What happens if we use fewer extra edges? In this paper we want to
understand the other extreme, when the number of added edges is 1. We
choose a single extra edge randomly, see
Figure \ref{fig:korall} (c). For any reversible chain, the mixing time
is again $\Omega(n^2)$ as there is still a path of length of order $n$
without any other edge.

However, if again combined with a drift along the cycle, we may
get a considerable speedup, with the mixing time decreasing to
$O(n^{3/2})$. Let us now proceed to the next section with the
necessary formal definitions and precise statement of the theorem.

\section{Preliminaries and main result}

Formally the Markov chains of interest can be built as follows.

\begin{definition}
  \label{def:model}
  Consider a cycle graph of $n$ vertices on $V=\{1,2,\ldots,n\}$. Choose a uniform random integer $k$
  from $[2,n-2]$.
  We will name
  vertices $k,n$ as \emph{hubs} and add the new edge $(k,n)$. For
  convenience, on the arc $1,2,\ldots,k$ we introduce
  the notation $a_1,a_2,\ldots, a_k$ for the vertices and name the arc $A$ while on the
  arc $k+1,k+2,\ldots,n$ we will use $b_1,b_2,\ldots,b_{n-k}$ and $B$.
  Define the Markov chain transition probabilities as follows. Set
  \begin{itemize}
  \item $P(a_i,a_{i+1})=P(b_j,b_{j+1})=1/2$ for all $1\le i \le k-1,~
    1\le j \le n-k-1$,
  \item $P(a_k,b_1)=P(b_{n-k},a_1)=1/2$,
  \item $P(a_i,a_i)=P(b_j,b_j)=1/2$ for all $1\le i \le k-1,~
    1\le j \le n-k-1$,
  \item $P(a_k,a_k)=P(a_k,b_{n-k})=P(b_{n-k},a_k)=P(b_{n-k},b_{n-k})=1/4$.
  \end{itemize}
\end{definition}
Set all other entries to 0.

It is easy to verify that this transition kernel is doubly stochastic,
therefore it is a valid transition kernel with the uniform
distribution as the stationary distribution (aperiodicity,
irreducibility ensures uniqueness).

We denote the Markov chain of interest on the cycle by $X(t)$, that
is, choosing $X(0)$ according to some preference and then using the
transition probabilities $P$ defined above.

We are going to compare probability distributions with their \emph{total
  variation distance}. For any $\mu,\sigma$ probability distributions
this is defined as
$$
\|\mu-\sigma\|_{\rm TV} := \sup_{S\subseteq V}
|\mu(S)-\sigma(S)| = \frac{1}{2}\sum_{x\in V}
|\mu(x)-\sigma(x)|.
$$
Keeping in mind that currently the stationary distribution is uniform,
which we denote by $\frac{\m1}{n}$, we define the maximal
distance and the \emph{mixing time} as
\begin{align}
d_X(t) &:=\sup_{X(0)\in V}\left\|\cL(X(t))-\frac{\m1}{n}\right\|_{\rm TV}, \label{eq:ddef}\\
  t_{\rm mix}(X,\eps)&:=\min\left\{t: d_X(t) \le \eps\right\}. \label{eq:tmixdef}
\end{align}
We now have all the ingredients to state our main results: 

\begin{theorem}
  \label{thm:main_upper_bound}
  There exist $\gamma, \gamma'>0$ constants such that for any $1/2>\eps>0$ the
  following holds. For the (randomized) Markov chain of Definition
  \ref{def:model}, for $n$ large enough, with probability at least
  $\gamma$ we have
  $$
  t_{\rm mix}(\eps) \le \gamma' n^{3/2} \log\frac{1}{\eps}.
  $$
\end{theorem}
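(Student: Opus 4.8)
\section*{Proof proposal}

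\textbf{Reduction to eigenvalues.} The plan is to diagonalize $P$ and read off the second-largest eigenvalue modulus as a function of $k$. Writing $Pf=\lambda f$ and setting $\mu:=2\lambda-1$, the relations $P(a_i,a_i)=P(a_i,a_{i+1})=\tfrac12$ (and likewise on $B$) force $f(a_i)=\mu^{i-1}f(a_1)$ and $f(b_j)=\mu^{j-1}f(b_1)$; plugging these into the two equations at the hubs $a_k,b_{n-k}$ and eliminating $f(a_1),f(b_1)$ yields the characteristic equation
\[
  2+\mu^{\,k-1}+\mu^{\,n-k-1}=2\mu^{\,n-1}(\mu+1),\qquad \lambda=\tfrac{\mu+1}{2}.
\]
Since $P$ is a rank-$O(1)$ perturbation of the lazy drift on $\mZ_n$, all $n$ roots lie within $o(1)$ of the circle $|\mu|=1$, and the slow modes have $\mu$ near $1$. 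It therefore suffices to prove: (i) with probability bounded below over $k$, every eigenvalue $\lambda\neq1$ satisfies $|\lambda|\le 1-c\,n^{-3/2}$; and (ii) this spectral-gap bound upgrades to the claimed mixing bound. I expect step (i) to be the heart of the matter.

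\textbf{Asymptotics of the roots near $\mu=1$.} Write $\mu=1+W/n$ with $W$ bounded and $\rho:=k/n$. Then $\mu^{k-1}\to e^{\rho W}$, $\mu^{n-k-1}\to e^{(1-\rho)W}$, $\mu^{n-1}\to e^{W}$, so the limiting equation is $2+e^{\rho W}+e^{(1-\rho)W}=4e^{W}$. One checks that $W=0$ is the only real root and that all other roots cluster near $W=2\pi i q$, $q\in\mZ\setminus\{0\}$; setting $W=2\pi iq+v$, using $e^{\pm\rho W\cdot i\text{-part}}=e^{\pm 2\pi i q\rho}$ and retaining the $O(W^2/n)$ correction that was dropped above, a short computation gives
\[
  \mathrm{Re}(v)\;\asymp\;-\Bigl(\|q\rho\|^{2}+\tfrac{q^{2}}{n}\Bigr),\qquad \text{hence}\qquad
  1-|\lambda_q|\;\asymp\;\frac{1}{n}\Bigl(\|q\rho\|^{2}+\frac{q^{2}}{n}\Bigr),
\]
where $\|x\|$ denotes distance to the nearest integer (and when $\|q\rho\|\asymp1$ the mode is fast, $1-|\lambda_q|\asymp1/n$; the regime $q\asymp\sqrt n$ gives $1-|\lambda_q|\gtrsim1/n$ from the $q^2/n^2$ term regardless). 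The two terms balance at $q\asymp n^{1/4}$, $\|q\rho\|\asymp n^{-1/4}$, producing $1-|\lambda_q|\asymp n^{-3/2}$: this is where the exponent $3/2$ comes from. Making this rigorous — i.e.\ turning the scaling limit into genuine estimates on the roots of the exact equation, uniformly in $k$ — is the technical core and the step I expect to fight hardest.

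\textbf{The good event for $k$, via Diophantine approximation.} The bound $1-|\lambda_j|\gtrsim n^{-3/2}$ fails only if $k/n$ admits a rational approximation $p/q$ with $q\lesssim n^{1/4}$ and $|k q-p n|\lesssim n^{3/4}$ (equivalently $\|qk/n\|\lesssim n^{-1/4}$ for some $q\lesssim n^{1/4}$). The set of such $k$ in $[2,n-2]$ is contained in $\bigcup_{q\le \epsilon n^{1/4}}\bigcup_{p}\bigl[\tfrac{pn}{q}-\tfrac{\epsilon n^{3/4}}{q},\tfrac{pn}{q}+\tfrac{\epsilon n^{3/4}}{q}\bigr]$, whose total length is at most $\sum_{q\le\epsilon n^{1/4}} 2\epsilon n^{3/4}\le 2\epsilon^{2} n$; choosing $\epsilon$ a small constant, the ``bad'' set has density $\le 2\epsilon^2<1$, so the good event has probability at least $\gamma:=1-2\epsilon^2>0$. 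On the good event one then concludes $1-|\lambda_j|\ge c\,n^{-3/2}$ for \emph{all} $j\ge1$: the modes with $\|jk/n\|\asymp1$ or $j\asymp\sqrt n$ or $j=\Theta(n)$ are handled by the cruder estimates above, and the remaining (near-$1$) modes by the good-event hypothesis together with the best-approximation property of continued-fraction convergents (so that non-convergent $q$ are controlled by the neighbouring convergents).

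\textbf{From the gap to the mixing time.} Finally, the eigenvalue bound $|\lambda_2|\le 1-c n^{-3/2}$ must be converted to $t_{\rm mix}(\eps)\le \gamma' n^{3/2}\log(1/\eps)$. Writing $P^{t}-\Pi=\sum_{j\ge1}\lambda_j^{t}\,v_j u_j^{\!\top}$ with the explicit eigenvectors from the recursion above, one gets $d_X(t)\le C(n)\,|\lambda_2|^{t}$; since $P$ is a bounded-rank perturbation of a \emph{normal} matrix (the lazy cycle, whose eigenbasis is the orthonormal Fourier basis) one controls the eigenbasis condition number $C(n)$, obtaining $d_X(t_0)\le 1/4$ for a suitable $t_0=O(n^{3/2})$, and then submultiplicativity $\bar d(\ell t_0)\le 2^{-\ell}$ yields the $\log(1/\eps)$ factor. (An alternative for this last step, which produces the clean $\log(1/\eps)$ dependence directly, is a coupling of two copies: the ``gap'' on $\mZ_n$ performs $\pm1$ steps at every step and $\pm k$ or $\pm(n-k)$ steps whenever one copy visits a hub, i.e.\ at rate $\Theta(1/n)$, giving a hitting time of $0$ of expected order $\sum_{j}(1-|\lambda_j|)^{-1}\asymp n^{3/2}$ on the good event.) Besides the root asymptotics, the main obstacle here is the uniform control of non-normality / the eigenbasis condition number — or, equivalently, making the coupling's gap process tractable despite its dependence on the individual positions rather than on the gap alone.
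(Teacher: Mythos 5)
Your proposal takes a genuinely different route from the paper, but as it stands it has two real gaps, and the second one is fatal to the stated bound. First, the eigenvalue analysis is only a scaling heuristic: you derive a limiting transcendental equation and read off $1-|\lambda_q|\asymp n^{-1}\bigl(\|q k/n\|^2+q^2/n\bigr)$, but you never turn this into rigorous, uniform-in-$k$ estimates for \emph{all} roots of the exact characteristic equation (you acknowledge this is the hard part). Second, and more fundamentally, for this non-reversible chain an eigenvalue bound does not by itself give the mixing bound. Your conversion $d_X(t)\le C(n)|\lambda_2|^t$ with $|\lambda_2|\le 1-cn^{-3/2}$ yields $t_{\rm mix}(\eps)\le\gamma' n^{3/2}\log\frac1\eps$ only if the eigenbasis condition number satisfies $C(n)=O(1)$; if $C(n)$ is merely polynomial in $n$ you already lose a $\log n$ factor and the theorem (for fixed $\eps$) no longer follows, and nothing in "bounded-rank perturbation of a normal matrix" controls $C(n)$ — low-rank perturbations of normal matrices can have arbitrarily ill-conditioned eigenbases, and here the perturbation at the hubs is exactly what produces the phenomenon being studied. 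The coupling fallback is equally unsubstantiated: the difference of the two copies is not a Markov chain (the $\pm k$ jumps occur when \emph{one particular copy} visits a hub, so the gap dynamics depend on the individual positions), and the claimed identity ``expected coalescence time $\asymp\sum_j(1-|\lambda_j|)^{-1}$'' is an eigentime-type statement you have not justified for a non-reversible chain. So the step that actually produces the inequality in the theorem is missing.

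For comparison, the paper never touches the spectrum. It counts the possible tracks on a two-dimensional grid (one axis per arc), proves a local-CLT-type lower bound $\mP(E_r)\gtrsim n^{-1/4}$ for each of the $\approx n^{1/4}$ near-diagonal exit points at graph distance $\approx n^{3/2}$, shows via a Farey/Euler-$\varphi$ interval-counting argument (Lemma \ref{lm:biggaps}) that for a positive fraction of $k$ these points map to vertices that are $\gtrsim n^{3/4}$ apart on the cycle, smears each by the $\approx n^{3/4}$ binomial fluctuation of the lazy/holding steps to get $\mP(X(T)=w)\gtrsim 1/n$ on a set $W$ of size $\ge(\tfrac12+\delta)n$, and concludes $\bar{d}(T)\le 1-\gamma_5$ at $T\asymp n^{3/2}$, after which submultiplicativity gives the $\log\frac1\eps$ factor. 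Your Diophantine good event is essentially the same number-theoretic counting as Lemma \ref{lm:biggaps} in spectral clothing, so that ingredient transfers; but to make your approach complete you would need, in addition to the rigorous root asymptotics, a quantitative non-normality (or interpolation/resolvent) bound playing the role that the paper's direct distributional lower bound plays — and that is precisely the piece your sketch does not supply.
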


\begin{theorem}
  \label{thm:main_lower_bound}
  There exist $\gamma^*, \eps^*>0$ constants such that for the (randomized) Markov chain of Definition
  \ref{def:model}, for $n$ large enough we (deterministically) have
  $$
  t_{\rm mix}(\eps^*) \ge \gamma^* n^{3/2}.
  $$
\end{theorem}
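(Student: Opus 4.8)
The plan is to exhibit a single slow-mixing obstruction that survives no matter where the random chord lands. The key structural fact is that one of the two arcs $A$ or $B$ always has length at least $(n-1)/2$; call it the long arc and let $\ell \ge (n-1)/2$ be its length. On this long arc the chain behaves like a lazy walk with a fixed drift toward one hub, with $1/2$ holding probability at each interior vertex; the only "shortcuts" are at the two endpoints of the arc (the hubs), where the chord may be used. So the plan is to show that to escape the long arc the walk must traverse distance of order $n$ along a drift-$1$ (after removing laziness) nearest-neighbour path, and a walk with unit per-step drift and unit per-step variance needs time $\Omega(\text{distance})$ just deterministically, but the fluctuations around the deterministic drift are only $O(\sqrt{\text{time}})$; pinning these together gives the $n^{3/2}$ scale. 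Concretely, I would start the chain at the vertex on the long arc that is farthest (in graph distance along the arc) from both hubs — i.e. the arc's midpoint — so that it sits at distance $\ge \ell/2 \ge (n-1)/4$ from either hub.

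The core estimate is a concentration bound: let $Y(t)$ be the (signed) displacement of $X$ along the long arc measured from the starting midpoint, defined for as long as $X$ has not yet reached a hub. Before hitting a hub, $Y$ is exactly a lazy $\pm 1$ random walk with drift $p - q$ per active step (where $p,q$ come from the fixed drift; with the laziness of Definition \ref{def:model} each step moves with probability $\le 1/2$). Hence $\mE[Y(t)] = ct$ for an explicit constant $c<1$ bounded away from $1$, and by Azuma–Hoeffding (or a Chernoff bound for sums of bounded independent increments) $\mP(|Y(t) - ct| \ge \lambda) \le 2\exp(-\lambda^2/(2t))$. Taking $t = \beta n^{3/2}$ with $\beta$ a small constant and $\lambda = \frac{1}{8}(n-1)$, we get that with probability $\ge 1 - \eps^*$ (for $\beta$ small enough, since $\lambda^2/t = \Theta(n^2/n^{3/2}) = \Theta(\sqrt{n}) \to \infty$) the walk has not moved more than $\frac{1}{4}(n-1) + \frac{1}{8}(n-1) < \ell$ away from the start along the arc, hence has not reached either hub by time $t$. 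But if the walk has never reached a hub, it has never used the chord and it is still confined to the ball of radius $\frac{3}{8}(n-1)$ around the start inside the long arc, which contains at most, say, $\frac{7}{8}n$ vertices; so $\cL(X(t))$ puts mass $\ge 1-\eps^*$ on a set of size $\le \frac{7}{8}n$, whereas the uniform stationary distribution puts mass $\le 7/8$ there, giving $\|\cL(X(t)) - \frac{\m1}{n}\|_{\rm TV} \ge (1-\eps^*) - 7/8 \ge \eps^*$ once $\eps^*$ is chosen small (e.g. $\eps^* = 1/16$). Since this holds for every value of the random chord, it holds deterministically, which yields $t_{\rm mix}(\eps^*) \ge \beta n^{3/2}$ with $\gamma^* = \beta$.

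One technical point I would handle carefully: the displacement process $Y$ is only well-defined up to the (random) hitting time of a hub, and after that the walk may loop around through the short arc and re-enter the long arc, possibly from the other side. The clean fix is to argue at the level of events: "by time $t$ the walk has reached a hub at least once" is precisely the event $\{\tau_{\rm hub} \le t\}$, and up to $\tau_{\rm hub}$ the process $Y$ genuinely is the lazy drifted walk, so $\mP(\tau_{\rm hub} \le t)$ is controlled by the probability that this drifted walk exits an interval of half-width $\ge \frac{1}{4}(n-1)$ by time $t$, which the Azuma bound above caps at $\eps^*$. On the complement $\{\tau_{\rm hub} > t\}$ the position $X(t)$ lies in the stated small ball. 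The main obstacle is getting the constants to line up — one needs the drift displacement $ct$ at time $t = \beta n^{3/2}$ to itself be below the arc's half-radius, i.e. $c\beta n^{3/2} \ll n$, which forces $\beta = o(n^{-1/2})$ unless one is slightly more careful: the correct statement is that $c\beta\sqrt{n} < \frac{1}{8}$ must hold, which fails for fixed $\beta$ as $n\to\infty$. I would therefore instead pick $t = \beta \sqrt{n} \cdot n = \beta n^{3/2}$ but measure escape not by a fixed fraction of $n$ but note that we only need the walk to fail to reach a hub, and the expected displacement $c\beta n^{3/2}$ does exceed $n$ for large $n$; so the right move is to start the chain at the hub-adjacent end is wrong, and instead one must either (i) reduce to $t = \beta n^{3/2}$ with the observation that what actually matters is the walk cannot cover the $\Omega(n)$ distance to a hub in time $o(n)$ with the $O(\sqrt t)$ fluctuations — i.e. the bottleneck is really the $\Theta(\sqrt n)$-fold gap between drift-time $\Theta(n)$ and diffusive time $\Theta(n^2)$, landing at $\Theta(n^{3/2})$ as the geometric mean — or (ii) use a conductance/bottleneck argument: the set $S$ = "long arc minus a neighbourhood of the hubs" has $\pi(S) = \Theta(1)$ but the ergodic flow out of $S$ is $Q(S,S^c) = O(1/n)$ coming only from two boundary edges carrying $O(1)$ probability each divided by $n$ vertices, whence by the standard conductance lower bound on mixing $t_{\rm mix}(\eps^*) \ge \frac{\pi(S)(1-\eps^*) - \eps^*}{?}$ — actually the cleanest is $t_{\rm mix} \gtrsim \pi(S)/Q(S,S^c) = \Theta(n)$, which only gives $\Omega(n)$, so the conductance bound alone is insufficient and the concentration argument with the corrected bookkeeping (drift carries the walk $\Theta(t/1) $ but it must go distance $\Theta(n)$ \emph{and} the walk's inability to reach the hub for $t \ll n^{3/2}$ comes from comparing $t$-step displacement $ct \pm O(\sqrt t)$ against the arc length) is the one to push through, being careful that the relevant regime is exactly $t \asymp n^{3/2}$ where $ct \asymp n$ and $\sqrt t \asymp n^{3/4}$, so one compares the starting point's distance $\asymp n$ to a hub against $ct + O(\sqrt t)$ and gets that for $t \le \gamma^* n^{3/2}$ with $\gamma^*$ small the hub is not reached except with probability $\le \eps^*$.
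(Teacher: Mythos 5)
There is a genuine gap: the central claim of your argument is false for this chain. In Definition \ref{def:model} the walk is totally asymmetric --- at every interior vertex it moves one step clockwise with probability $1/2$ and holds with probability $1/2$, and it never steps backwards --- so the displacement along the cycle after $t$ steps is $\mathrm{Binom}(t,1/2)$, concentrated at $t/2$. Your own Azuma/Chernoff bound therefore shows the \emph{opposite} of what you need: starting anywhere, the distance to the nearest hub in the drift direction is at most $n$, so the hub is reached with probability $1-o(1)$ by time $O(n)$, and by time $t=\beta n^{3/2}$ the chain has wrapped around the cycle and visited the hubs $\Theta(\sqrt n)$ times. The event ``$\tau_{\rm hub}>t$'' on which your confinement argument rests has probability tending to $0$, not $1-\eps^*$. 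You notice this yourself (``$c\beta\sqrt n<\frac18$ must hold, which fails for fixed $\beta$''), but the attempted repairs do not go through: the conductance route, as you concede, only yields $t_{\rm mix}=\Omega(n)$, and the closing sentence --- that for $t\le\gamma^* n^{3/2}$ the hub is not reached except with probability $\le\eps^*$ --- directly contradicts the drift estimate $ct\pm O(\sqrt t)$ with $ct\asymp n^{3/2}\gg n$. No argument based on ``the chord is never used before time $t$'' can give a bound better than $\Omega(n)$ for this model.

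The actual obstruction at scale $n^{3/2}$ is not confinement but insufficient spreading: after $t$ steps the chain has made only $\Theta(t/n)$ arc choices, so its law is (up to small error) an overlay of $\Theta(t/n)$ diffusive packets, each of width $\Theta(\sqrt t)$, and for $t=\rho n^{3/2}$ with $\rho$ small these cover only a constant fraction ($\approx\gamma_3\rho<1$) of the cycle, with pointwise mass about $2/n$ there. The paper quantifies exactly this: it fixes $\rho=\gamma_4/2$, reuses the grid machinery and Lemma \ref{lm:w_prob_bound} in the sharpened form of Corollary \ref{cor:w_prob_bound} to get $\mP(X(T^i)=w,E_r)\ge 2/n$ for many compatible pairs, and uses an averaging-over-time-shifts argument (the zigzag evolution of $R_1^i$, which also removes any dependence on the spread-out event of Lemma \ref{lm:biggaps}, making the bound deterministic in $k$) to find a time $T^i=\Theta(n^{3/2})$ at which
$\left\|\cL(X(T^i))-\frac{\m1}{n}\right\|_{\rm TV}\ge \gamma_3\rho/4+O(n^{-1/4})$,
whence $t_{\rm mix}(\eps^*)\ge\gamma^* n^{3/2}$ by monotonicity of $d(\cdot)$. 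To fix your proof you would need to replace the ``never hits a hub'' event by a statement about how few distinct positions the chain can effectively occupy at time $t\ll n^{3/2}$ (few arc choices times diffusive width $\sqrt t$), which is essentially the paper's route.
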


During the statements and proofs we will have various constants
appearing. As a general rule, we use $\gamma_i$ for the statements
that depend on each other using increasing indices to express dependence, while
$\gamma,\gamma',\gamma^*$ and $\eps^*$ of Theorem
\ref{thm:main_upper_bound} and \ref{thm:main_lower_bound} might depend on
all of them. We will carry through a time scaling constant $\rho$
which we can take to be 1 for most of the paper but we will need this
extra flexibility for the lower bound.
We use various $\beta_j$ during the proofs
and only for the scope of the proof, they might be reused later.
Currently our Markov chain is based on the cycle so we will use the metric
on it, therefore $|i-j|$ will often be used as a simplification for
$\min(|i-j|, n-|i-j|)$ when appropriate.

The rest of the paper is organized as follows.
First we analyze the path taken
after a certain number of moves and record it on a square grid. We give estimates on
reaching certain points of interests on this grid. This is worked out
in Section \ref{sec:grid}. Afterwards, we investigate the connection
of the grid to our original graph in Section \ref{sec:gridmap}. In
Section \ref{sec:diffusion} we switch from possible tracks to the actual
Markov chain and examine the appearing diffusive randomness. In
Section \ref{sec:mixproof} we join the elements to get a complete proof of
Theorem \ref{thm:main_upper_bound}. We adjust this viewpoint to obtain
the lower bound of Theorem \ref{thm:main_lower_bound} in Section
\ref{sec:lowerbound}. Finally, we conclude in Section \ref{sec:discussion}.

\section{Grid representation of the tracks}
\label{sec:grid}

As a first step we want to understand the track of the Markov chain it
traverses without the time taken to walk through it. That is, we
disregard loop steps and we also disregard (possibly repeated) hopping
between hubs, in this sense we will often treat the two hubs
together. This means that for our purpose the track of the Markov chain is composed of traversing an
arc, then choosing the next when reaching the pair of hubs, then traversing
another one and so on.

In order to represent this, consider the non-negative quadrant of the square integer lattice together
with a direction, that is $\mZ_+^2\times H$, where $H = \{A,B\}$. A position
position $(x,y,h)$ will represent that the Markov chain has taken the $A$ arc
$x$ times, the $B$ arc $y$ times and it arrived to the position on a
horizontal ($A$) or vertical ($B$) segment. We may also translate intermediate
points of the grid lines of the non-negative quadrant as being within a certain arc (after
completely traversing some previously), here the direction $h$ is
predetermined by the direction of the segment where the point lies. Note that at
grid points of $\mZ_+^2$ the arcs would overlap, representing both
$a_k$ and $b_{n-k}$, this is the reason for the extra property of direction.
At the initial point there are no traversed arcs yet, $X(0)$ therefore corresponds to a point $(-\lambda, 0, A)$ or
$(0,-\lambda, B)$ for some $\lambda\in [0,1)$. This structure can be seen in Figure \ref{fig:gridalap}.

\begin{figure}[h]
  \centering
  \includegraphics[width=0.4\textwidth]{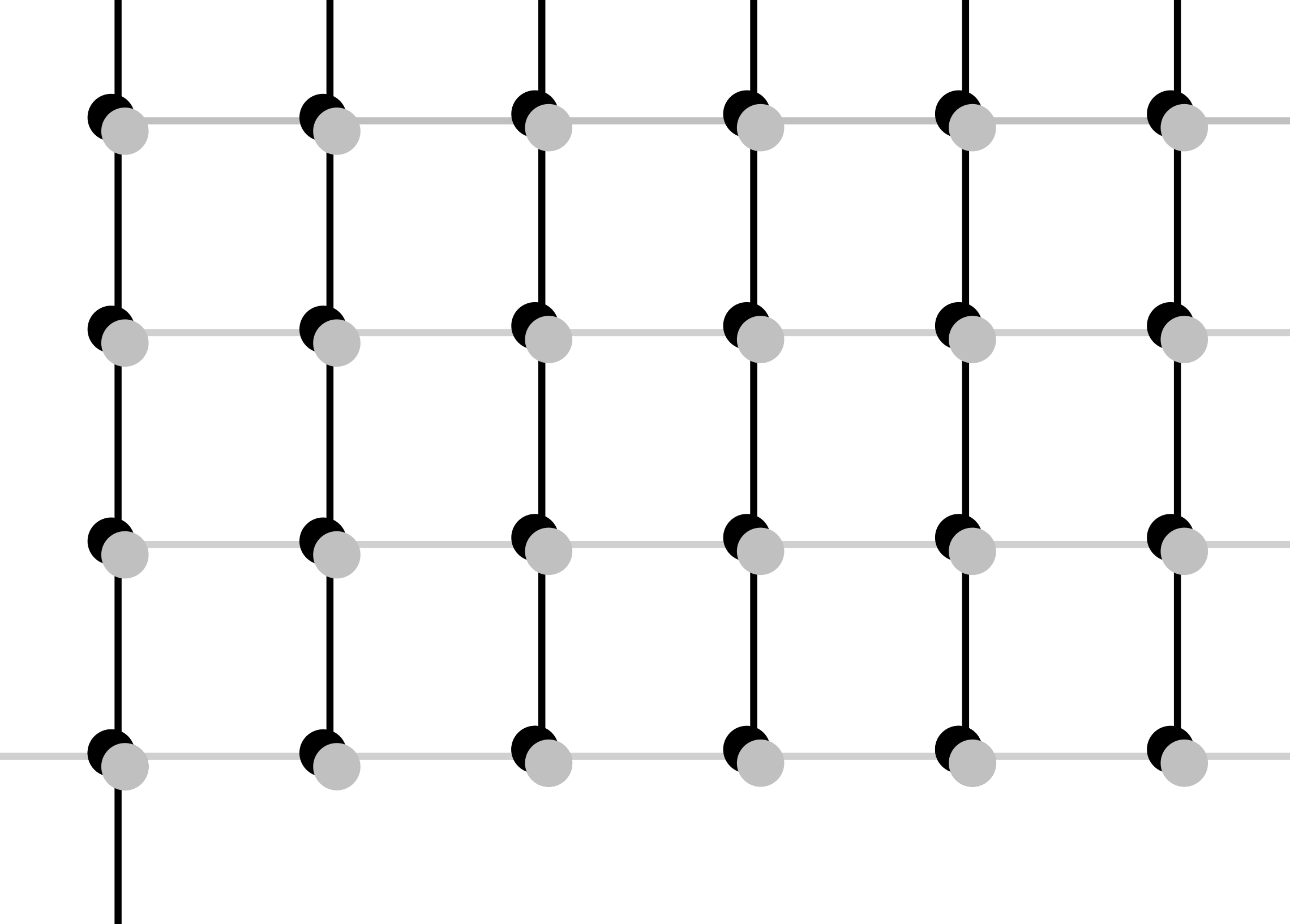}
  \caption{Illustration of $\mZ_+^2\times H$. Points of form
    $(\cdot,\cdot,A)$ are represented by gray dots
    while $(\cdot,\cdot,B)$ points are represented by black dots. The extension to grid lines is also shown,
    with gray and black lines corresponding to points having $A$ and
    $B$ last coordinate, respectively.}
  \label{fig:gridalap}
\end{figure}

We are interested at the track at some $L=\rho n^{3/2} + O(n)\in\mZ$
distance on the graph. This is represented by $\approx
\rho n^{1/2}$ distance in the current grid where each segment represents an
arc of the cycle. Formally, we need the slanted line of the
following intersection points, see also Figure \ref{fig:gridR}.
\begin{equation}
  \label{eq:Rdef}
  R = \{(x,y,h)\in \mR_+^2\times H ~|~ (x\in\mZ_+,h=B) \textrm{ or } (y\in\mZ_+,h=A), kx+(n-k)y=L\},
\end{equation}

\begin{figure}[h]
  \centering
  \includegraphics[width=0.4\textwidth]{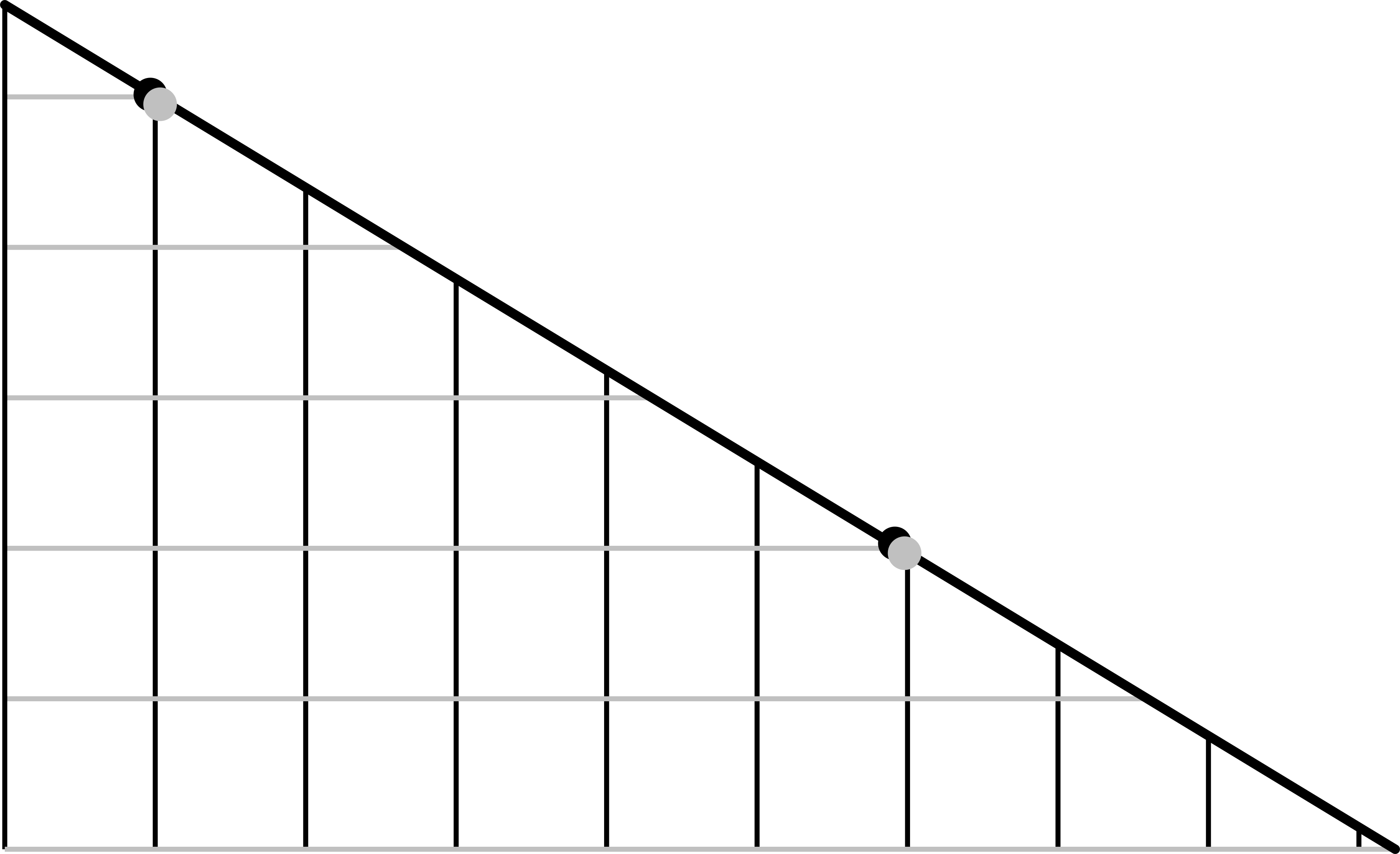}
  \caption{Illustration of $R$. An intersection of the slanted line with
    a grid line represents a single point in $R$ while passing through
    a grid point corresponds to two points in $R$.}
  \label{fig:gridR}
\end{figure}
Note that the constraints on the coordinates of the points of $R$
are sufficient so that
they do properly
map to vertices of the graph. Let us denote by $g:R\ra V$ this
function that recovers the vertex of the graph given a point in $R$.
For $r\in R$ let us denote by $E_r$ the
event that the Markov chain reaches $r$. Observe that these present a
mutually exclusive partitioning of the probability space (except a
null-set where the movement of the Markov-chain is bounded).

\subsection{Exact expression for $E_r$ probabilities}

The main goal of this section is to get usable bounds for
$\mP(E_r)$. Let us examine how the Markov chain may reach $r$.
The transition probabilities at the hubs imply that after traversing a
horizontal unit segment there is $3/4$ probability to choose a
vertical one and $1/4$ to take another horizontal one, and vice
versa (and no backtracking can happen). For a fixed $r=(x,y,h)$ let us define the preceding grid point
on $\mZ_+^2$ in the $h$ direction $r'=(x',y')$:
$$
r' = (x',y') = \begin{cases}
  \lceil x \rceil -1, y &\text{ if } h=A,\\
  x, \lceil y \rceil -1 &\text{ if } h=B.
\end{cases}
$$
We will simply use $x', y'$ by themselves if $r$ is clear from
the context.
\begin{lemma}
  For $r\in R$ we have the following probability bound for reaching it:
  \begin{equation}
    \label{eq:binom-global}
    \mP(E_r) \ge \frac 1 4 \left[Binom(x',3/4) *
      Binom(y',1/4)\right](y') \ge \frac{1}{9}\mP(E_r),
  \end{equation}
  where the $Binom$ stand for Binomial distributions and $*$ indicate
  their convolution.
\end{lemma}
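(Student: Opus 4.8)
\subsection*{Proof proposal}

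The plan is to strip the Markov chain down to its \emph{track} (the ordered sequence of fully traversed arcs), express $E_r$ as a finite event about that sequence, evaluate it exactly by a short enumeration, and match the result to the binomial convolution via Pascal's identity. Let $c_1,c_2,\dots\in\{A,B\}$ be the arcs the chain fully traverses in order, $c_1$ being the first one it completes. Summing the geometric series over loop steps and hub-to-hub hops -- equivalently, solving the first-passage problem from a hub to $\{a_1,b_1\}$ -- gives that, conditionally on $c_1,\dots,c_i$, the arc $c_{i+1}$ equals $c_i$ with probability $1/4$ and differs with probability $3/4$; the very same computation fixes the law of $c_1$ from $X(0)$ (started at a hub, the first completed arc is the ``far'' one with probability $3/4$, the ``near'' one with probability $1/4$). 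For $r=(x,y,A)$ one then checks that $E_r$ is exactly the event that $c_1\cdots c_{x'+y'}$ contains precisely $x'$ letters $A$ and that $c_{x'+y'+1}=A$: after $x'+y'$ arcs the chain sits at the grid point $(x',y')$, the next arc is horizontal, and covering the residual graph-distance inside it lands on the segment carrying $r$; symmetrically for $h=B$. Over $r\in R$ these events partition the probability space, so $\mP(E_r)$ depends only on the track.

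Conditioning on $c_1$, I would compute $\mP(E_r\mid c_1{=}c)$ by enumeration. Given $c_1{=}c$, the relevant prefix $w=c_1\cdots c_{x'+y'+1}$ is a finite $\{A,B\}$-word with prescribed first and last letter and prescribed numbers of each letter, and its probability is $(3/4)^{s(w)}(1/4)^{(x'+y')-s(w)}$, where $s(w)$ is the number of adjacent unequal pairs, i.e. (number of maximal runs)$-1$. Grouping the words by their number of runs and counting the compositions of a fixed letter count into a fixed number of nonempty runs by stars and bars collapses the sum to a closed form: in the case $h=A$ one gets $\mP(E_r\mid c_1{=}A)=4^{-(x'+y')}\sum_j 9^{\,j}\binom{x'}{j}\binom{y'-1}{j-1}$ and $\mP(E_r\mid c_1{=}B)=3\cdot 4^{-(x'+y')}\sum_j 9^{\,j}\binom{x'}{j}\binom{y'-1}{j}$, with the analogous pair (roles of $x',y'$ swapped) for $h=B$.

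On the other hand, expanding the convolution directly gives
\[
[Binom(x',3/4)*Binom(y',1/4)](y')=4^{-(x'+y')}\sum_j 9^{\,j}\binom{x'}{j}\binom{y'}{j},
\]
and Pascal's rule $\binom{y'}{j}=\binom{y'-1}{j-1}+\binom{y'-1}{j}$ (applied in the $x'$ slot instead for $h=B$) splits this sum exactly into the two conditional probabilities above:
\[
[Binom(x',3/4)*Binom(y',1/4)](y')=\mP(E_r\mid c_1{=}A)+\tfrac13\,\mP(E_r\mid c_1{=}B)
\]
for $h=A$, and with $A,B$ interchanged for $h=B$. Finally, writing $P_A=\mP(E_r\mid c_1{=}A)\ge0$ and $P_B=\mP(E_r\mid c_1{=}B)\ge0$, the law of $c_1$ makes $\mP(E_r)=w_AP_A+w_BP_B$ a mixture with $\{w_A,w_B\}=\{1/4,3/4\}$. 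An elementary term-by-term comparison of $\mP(E_r)$ with the displayed identity -- valid because $w_A,w_B\in[1/4,3/4]$ while the coefficients on the right are $1$ and $\tfrac13$, so $\tfrac14\cdot 1\le w_A,w_B$, $\tfrac14\cdot\tfrac13\le w_A,w_B$, $w_A,w_B\le\tfrac94\cdot 1$ and $w_A,w_B\le\tfrac94\cdot\tfrac13$ -- yields both $\tfrac14[Binom(x',3/4)*Binom(y',1/4)](y')\le\mP(E_r)$ and $\mP(E_r)\le\tfrac94[Binom(x',3/4)*Binom(y',1/4)](y')$, which is the claim; the lower constant $\tfrac14$ is attained when $r$ sits on an axis, where one of $P_A,P_B$ vanishes.

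The step I expect to be the main obstacle is the bookkeeping in the enumeration: getting the run counts and the stars-and-bars binomials right, and handling the degenerate terms cleanly ($j=0$, empty runs, or $x'$ or $y'$ equal to $0$, where $\binom{y'-1}{j-1}$ must be read via the run-counting rather than naively). A related point to nail down is the convention for $X(0)$ so that the mixture weights $w_A,w_B$ are both bounded away from $0$ -- this is exactly why one works with the chain started at a hub.
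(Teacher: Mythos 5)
Your proposal is correct, and its core is the same as the paper's: reduce $E_r$ to an event about the sequence of completed arcs, use the $3/4$--$1/4$ turn/continue law at the hub pair, enumerate admissible words by their number of direction changes, and recognize the resulting sum as a binomial convolution evaluated at one point (your run/stars-and-bars count is exactly the paper's $\binom{x'}{i}\binom{y'}{i}$ count of routes with $2i+1$ turns). The only genuine difference is how the four start/end orientation cases are merged. The paper writes a separate, slightly shifted convolution for each case (e.g.\ $Binom(x'+1,3/4)*Binom(y'-1,1/4)$) and compares it to the reference $Binom(x',3/4)*Binom(y',1/4)$ by a one-bit coupling, paying a factor $3$ each way; you instead condition on the first completed arc and use Pascal's rule to split the reference convolution \emph{exactly} as $P_A+\tfrac13 P_B$ (respectively $P_B+\tfrac13 P_A$), then compare with the mixture weights $\{1/4,3/4\}$. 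Both routes yield the same constants $1/4$ and $1/9$, and your identity in fact reproduces the paper's exact formula $\mP(E_r)=\tfrac34\left[Binom(x',3/4)*Binom(y',1/4)\right](y')$ in the $X(0)_H=A$, $r_H=B$ case, so the exact decomposition is a slightly cleaner way to organize the same computation; it also handles the degenerate $x'=0$ or $y'=0$ points (where one conditional probability vanishes) without extra argument, which the paper only remarks on in passing.
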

\begin{proof}
For the moment,
let us assume that $r_H=B$, $X(0)_H=A$, and $x',y'>0$. The calculations
for all other cases work similarly, we are going to discuss the
differences afterwards.

We need to gather all possible routes to $r$ together with their
probabilities. Assume that there were $2i+1$ changes of direction (it
is certainly odd as a horizontal start ended on a vertical
segment). This already determines that the probability of such a path
is
$$
\left(\frac 3 4\right)^{2i+1} \left(\frac 1 4\right)^{x'+y'-2i}.
$$
To achieve $2i+1$ changes of directions, we need to split the total $x'$
horizontal steps into $i+1$ parts, noting that a change may be valid
after 0 steps due to the horizontal initialization. Consequently there
are $x' \choose i$ ways to do this. Similarly, there are $y' \choose
i$ possibilities to arrange the vertical steps. Altogether, we get
$$
\mP(E_r) = \sum_{i=0}^\infty {x' \choose i} {y' \choose i} \left(\frac 3 4\right)^{2i+1} \left(\frac 1 4\right)^{x'+y'-2i}.
$$
(The summation to $\infty$ is just for notational convenience, clearly
there are only finitely many non-zero terms.)
We may reformulate this as follows:
\begin{equation}
  \label{eq:binom-decomposition}
  \begin{aligned}
    \mP(E_r) &= \sum_{i=0}^\infty {x' \choose i} \left(\frac 3 4\right)^{i+1} \left(\frac 1 4\right)^{x'-i}
    {y' \choose y'-i} \left(\frac 1 4\right)^{y'-i} \left(\frac 3 4\right)^{i}\\
    &= \frac 3 4 \left[Binom(x',3/4) * Binom(y',1/4)\right](y'). 
  \end{aligned}
\end{equation}
For all the scenarios in terms of the orientation of the first and last segment,
we can perform the same computation. Collecting all the similar
expressions together we get:
\begin{align}
  X(0)_H=A, r_H=B \quad\Rightarrow\quad \mP(E_r) &= \frac 3 4 \left[Binom(x',3/4) * Binom(y',1/4)\right](y').\label{eq:binom-AB}\\
  X(0)_H=A, r_H=A \quad\Rightarrow\quad \mP(E_r) &= \frac 3 4 \left[Binom(x'+1,3/4) * Binom(y'-1,1/4)\right](y'),\label{eq:binom-AA}\\
  X(0)_H=B,r_H=B \quad\Rightarrow\quad \mP(E_r) &= \frac 3 4 \left[Binom(x'-1,3/4) * Binom(y'+1,1/4)\right](y'),\\
  X(0)_H=B,r_H=A \quad\Rightarrow\quad \mP(E_r) &= \frac 3 4 \left[Binom(x',3/4) * Binom(y',1/4)\right](y').
\end{align}
For our further work it will be convenient to avoid case
splitting. Observe that each distribution is a sum of $x'+y'$ bits with
very minor difference. This means that, for instance comparing
\eqref{eq:binom-AB} and \eqref{eq:binom-AA}, we have for any $s \in
[0,1,\ldots, x'+y']$
$$
\frac 1 3 \le
\frac{\left[Binom(x'+1,3/4) * Binom(y'-1,1/4)\right](s)}
{\left[Binom(x',3/4) * Binom(y',1/4)\right](s)}
\le 3,
$$
as the two distributions can be perfectly coupled except for a single bit of
probability $3/4$ being coupled to a bit of probability
$1/4$. Using the same comparison for the other cases with the reference
\eqref{eq:binom-AB} 
and accepting this error margin we get the overall bounds
\begin{align*}
  \mP(E_r) &\ge \frac 1 4 \left[Binom(x',3/4) *  Binom(y',1/4)\right](y'),\\
    \frac 1 3 \mP(E_r) &\le \frac 3 4 \left[Binom(x',3/4) * Binom(y',1/4)\right](y'),
\end{align*}
which combine together to give the statement of the lemma.
This is now valid irrespective of directions. Even more, this final
inequality holds true when $x'=0$ or $y'=0$ (although we will not need
it). 
\end{proof}

In the next subsection we aim to bound this in a simpler form. We want to emphasize that
using the two Binomial distributions and evaluating their convolution
at one point provides the probability for a single $E_r$, different
distributions are needed for other elements of $R$.

\subsection{Simplified estimates of $E_r$ probabilities}

We will give estimates for points which are close to the
diagonal. Define
$$
R_0 = \{r \in R ~|~ |x'-y'| \le \sqrt{\rho}n^{1/4}\}.
$$
By the definition \eqref{eq:Rdef}
of $R$ we get
$$\min(x',y')n \le L \le \max(x',y')n,$$
which ensures $x',y' \in \left[\rho n^{1/2} - \sqrt{\rho}n^{1/4} + O(1), \rho n^{1/2} +
  \sqrt{\rho}n^{1/4} + O(1)\right]$ within this set.
\begin{lemma}
  There exists constant $\gamma_1>0$ such that for $n$ large enough and any point $r\in R_0$ we have
  \begin{equation}
    \label{eq:PEr-bound}
    \mP(E_r) \ge \frac{\gamma_1}{\sqrt{\rho}n^{1/4}}.
  \end{equation}
\end{lemma}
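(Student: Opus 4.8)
The plan is to combine the convolution bound \eqref{eq:binom-global} with a local central limit estimate. By that lemma it suffices to show that, uniformly over $r\in R_0$,
$$
\left[Binom(x',3/4)*Binom(y',1/4)\right](y')\ \ge\ \frac{\gamma_1'}{\sqrt\rho\,n^{1/4}}
$$
for some absolute $\gamma_1'>0$; then $\gamma_1=\gamma_1'/4$ works. Write $S=U+W$ with $U\sim Binom(x',3/4)$ and $W\sim Binom(y',1/4)$ independent, so the left-hand side equals $\mP(S=y')$. Here $S$ is a sum of $x'+y'$ independent Bernoulli bits, each with parameter $1/4$ or $3/4$, with $\mE S=\tfrac34 x'+\tfrac14 y'$ and $\operatorname{Var} S=\tfrac{3}{16}(x'+y')$.

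First I would record the size estimates valid throughout $R_0$. Since the definition of $R$ in \eqref{eq:Rdef} forces $x',y'=\rho n^{1/2}+O(n^{1/4})$ on $R_0$, the standard deviation of $S$ is $\sigma:=\sqrt{\operatorname{Var}S}=\Theta\!\big(\sqrt\rho\,n^{1/4}\big)$, while the membership condition $|x'-y'|\le\sqrt\rho\,n^{1/4}$ gives
$$
\big|y'-\mE S\big|=\tfrac34\,|y'-x'|\ \le\ \tfrac34\sqrt\rho\,n^{1/4}\ \le\ 2\sigma
$$
once $n$ is large. In other words, the evaluation point $y'$ lies within a bounded number of standard deviations of the mean of $S$ — this is exactly the role of the threshold $\sqrt\rho\,n^{1/4}$ in the definition of $R_0$.

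The core of the argument is a local limit lower bound of the shape: for $S$ as above (a sum of $N=x'+y'\to\infty$ independent, uniformly bounded, integer-valued summands of lattice span $1$ whose variances are bounded away from $0$) and any integer $s$ with $|s-\mE S|\le C\sigma$, one has $\mP(S=s)\ge c(C)/\sigma$. One may quote this from a standard local CLT, or argue by hand: by reflection $W\overset{d}{=}y'-W'$ with $W'\sim Binom(y',3/4)$, hence
$$
\mP(S=y')=\mP(U=W')=\sum_{j}\mP(U=j)\,\mP(W'=j).
$$
A Stirling estimate shows $\mP(U=j),\,\mP(W'=j)\ge c'/(\sqrt\rho\,n^{1/4})$ for every $j$ within $O(1)$ standard deviations of $\tfrac34 x'$, resp.\ of $\tfrac34 y'$; these two windows overlap in an interval $I$ of length $\Theta(\sqrt\rho\,n^{1/4})$ because their centres differ only by $\tfrac34|x'-y'|=O(\sqrt\rho\,n^{1/4})$. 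Restricting the sum to $I$ and using $\sum_{j\in I}\mP(U=j)\ge c''>0$ (ordinary de Moivre--Laplace, since $I$ contains an interval around $\tfrac34 x'$ of width a fixed proportion of $\sigma$) yields $\mP(S=y')\ge c'c''/(\sqrt\rho\,n^{1/4})$, and the lemma follows.

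The main obstacle is making this local-limit step quantitative with constants that are genuinely uniform — over all $r\in R_0$ and over the relevant range of the scaling constant $\rho$. The delicate features are that the summands are not identically distributed (the proportion of $1/4$- versus $3/4$-bits varies with $r$) and that one must certify the implied constant does not deteriorate; routing the computation through the reflection identity $\mP(S=y')=\mP(U=W')$ isolates the issue as the overlap of two binomial bulk regions, after which only Stirling and the classical CLT are needed.
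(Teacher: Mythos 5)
Your proposal is correct, and it takes a genuinely different route from the paper. Both proofs start from the reduction via \eqref{eq:binom-global} to a pointwise lower bound on $\left[Binom(x',3/4)*Binom(y',1/4)\right](y')$, and both exploit that membership in $R_0$ places the evaluation point $y'$ within $O(1)$ standard deviations of the mean $\tfrac34 x'+\tfrac14 y'$ (your identity $y'-\mE S=\tfrac34(y'-x')$ is exactly the relevant computation). From there the paper proceeds analytically: it applies the Berry--Esseen theorem to the non-identically-distributed sum to control \emph{interval} probabilities of the normalized convolution, and then uses log-concavity of the convolution of binomials to convert an interval bound on $[\pm 3/2,\pm 2]$ into a pointwise bound, dividing by the number of lattice points in the interval. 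You instead use the reflection $W\overset{d}{=}y'-W'$ with $W'\sim Binom(y',3/4)$ to rewrite the convolution value as $\mP(U=W')=\sum_j\mP(U=j)\mP(W'=j)$, which decouples the inhomogeneous sum into two honest binomials; then pointwise Stirling (de Moivre--Laplace local) lower bounds of order $1/(\sqrt\rho\,n^{1/4})$ on each pmf near its mean, together with the fact that the two bulk windows overlap (their centres differ by $\tfrac34|x'-y'|=O(\sigma)$) and that the overlap carries constant $U$-mass, give the claim. Your route avoids both Berry--Esseen and the log-concavity averaging trick at the price of needing a local (pointwise) binomial estimate rather than only distribution-function bounds; the paper's route avoids any local limit theorem but needs the log-concavity observation to pass from intervals to a single lattice point. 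One small point to make explicit if you write this up: choose the window radius a sufficiently large constant multiple of $\sigma$ (say $3\sigma$), since the two centres can differ by about $1.8\sigma$, so that the overlap interval both has length $\Theta(\sigma)$ and captures a fixed fraction of the mass of $U$; with that choice all constants are uniform over $r\in R_0$ for fixed $\rho$, which is all the paper ever needs.
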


Clearly this is what one would expect from CLT asymptotics,
and such bounds are widely available for simple Binomial distributions. Here is one
possible way to confirm
the claim for our non-homogeneous case.
\begin{proof}
Let
$$
Q = Binom(x',3/4) * Binom(y',1/4),
$$
the distribution appearing in
\eqref{eq:binom-decomposition}. It can be viewed as the sum of $x'+y'$
independent indicators. We will approximate $Q$ with a Gaussian
variable in a quantitative way using the Berry-Esseen theorem.

For an indicator with probability $1/4$ the variance is $3/16$, the
absolute third moment after centralizing is $15/128$, we get the same
values for the indicator with probability $3/4$ due to
symmetry. Consequently we may consider the approximation
$$
Q \approx \cN\left(\frac{3x'+y'}{4},\frac{3x'+3y'}{16}\right).
$$
Denoting by $F_Q$ and $F_\cN$ the cumulative
distribution functions of $Q$ and the properly scaled Gaussian above,
the Berry-Esseen theorem (for not identically distributed variables)
ensures
\begin{equation}
  \label{eq:berryesseen}
  \sup_{\xi\in\mR} |F_{Q}(\xi) - F_{\cN}(\xi)|\le \beta_1 \left(\frac{3}{16}(x'+y')\right)^{-3/2} \frac{15}{128}(x'+y') = \frac{\beta_2}{(x'+y')^{1/2}},
\end{equation}
where $\beta_1>0$ is the global constant of the theorem.
Combining with
the necessary second and third moments of our sum of independent
variables added up, we get an explicit constant $\beta_2>0$ for the current
problem. Note that the Berry-Essen theorem is originally stated for
centered and normalized sums, but joint shifting and scaling does not
change the difference of the cumulative distribution functions. 

Let us now introduce the normalized distribution $\tilde{Q}$ by
$$
\tilde{Q}\left(\left(C-\frac{3x'+y'}{4}\right)\frac{4}{\sqrt{3x'+3y'}}\right) = Q(C)
$$
for any measurable set $C$, which is then approximated by the standard
Gaussian distribution $\Phi$ (but is still a discrete distribution). This definition implies that in
\eqref{eq:binom-decomposition} we need the value of
$$
\beta_3=\tilde{Q}\left(\left\{\alpha\right\}\right),\quad
  \text{where }\alpha = \frac{\sqrt{3}(y'-x')}{\sqrt{x'+y'}}.
$$
Observe that by the definition of $R_0$  we have $|\alpha|<\sqrt{3/2}+O(n^{-1/8})<3/2$, for $n$ large enough. Define the intervals
$$
I_-=\left[-2,-\frac 3 2\right],\quad
I_+=\left[\frac 3 2, 2\right].
$$
Recall that Binomial distributions are log-concave, so is their
convolution $Q$, and its affine modification $\tilde{Q}$. Consequently,
for any grid point the probability is at least all those that precede
it or at least all those that are after it. In particular, $\beta_3$ is
bounded below by all the (grid point) probabilities of $I_-$ or
$I_+$. Simplifying further, we can take the average probabilities on the
intervals, the lower will be a valid lower bound for $\beta_3$.

By the Berry-Essen estimate \eqref{eq:berryesseen} we have that
\begin{align*}
  q_-= \tilde{Q}(I_-) &= \Phi\left(\left[-2, -\frac 3 2\right]\right) + O(n^{-1/4}),\\
  q_+ = \tilde{Q}(I_+) &= \Phi\left(\left[\frac 3 2, 2\right]\right) + O(n^{-1/4}).
\end{align*}

To estimate the number of grid points in the two intervals, we refer back to the
unnormalized distribution $Q$ where we have to count the integers in
the corresponding intervals, considering the scaling used. As an upper
bound $m$ for the number of contained grid points we get
$$
m \le \left\lfloor \frac 1 2 \frac{\sqrt{3x'+3y'}}{4}\right\rfloor +1
\le \frac{\sqrt{6\rho}}{8}(n^{1/4}+O(n^{1/8}))+1
\le \frac 1 2 \sqrt{\rho}n^{1/4} +O(n^{1/8}).
$$

Combining our observations and estimates we get to bound by the averages as
\begin{equation*}
  \beta_3 \ge \frac{\min(q_-,q_+)}{m} \ge \frac{\Phi([3/2, 2]) + O(n^{-1/4})}{(1/2) \sqrt{\rho}n^{1/4} + O(n^{1/8})} =
  \frac{\beta_4}{\sqrt{\rho}n^{1/4}} + O(n^{-1/8}).
\end{equation*}
Finally, plugging this bound on $\beta_3$ into \eqref{eq:binom-global}
we arrive at
\begin{equation*}
  \mP(E_r) \ge \frac{\gamma_1}{\sqrt{\rho}n^{1/4}},
\end{equation*}
for any $0<\gamma_1<\beta_4/4$ and $n$ large enough, which matches the claim of the lemma.

\end{proof}

\section{Mapping the grid to the cycle}
\label{sec:gridmap}

In the previous section using the grid we have abstractly identified
points $R$ at appropriate distance from the starting position and also
points $R_0\subseteq R$ which are reached with non-negligible
probabilities. As the next step, we want to understand what these points represent on the original
cycle. W.l.g.\ we assume $k\le n/2$.

\begin{lemma}
  We have $\sqrt{\rho}n^{1/4}+O(1) \le |R_0| \le 4\sqrt{\rho}n^{1/4}+O(1)$.
\end{lemma}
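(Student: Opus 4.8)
The plan is to split $R_0$ according to the last coordinate $h$ and count the two resulting families of points separately. By the defining formula~\eqref{eq:Rdef}, a point $r=(x,y,B)\in R$ is determined by its integer first coordinate $x\in\mZ_+$, with $y=(L-kx)/(n-k)$; a point $r=(x,y,A)\in R$ is determined by its integer second coordinate $y\in\mZ_+$, with $x=(L-(n-k)y)/k$; and each $\mZ_+^2$-point on the slanted line $kx+(n-k)y=L$ contributes exactly one point of each type. Hence $|R_0|$ is the sum of the number of admissible integers $x$ (for the $B$-family) and of admissible integers $y$ (for the $A$-family), with no overlap.

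Next I would convert the constraint $|x'-y'|\le\sqrt\rho n^{1/4}=:T$ into a condition on the free integer coordinate. For a $B$-point, $x'=x$ while $y'\in[y-1,y]$, so the admissible $x$ are squeezed between the integers with $|x-y|\le T-1$ and those with $|x-y|\le T+1$. A one-line computation on the line gives $x-y=(nx-L)/(n-k)$, which is affine in $x$ and strictly increasing with slope $n/(n-k)$; therefore each of the two squeezing sets is an interval of $x$-values of length $2T(n-k)/n+O(1)$, and so contains $2T(n-k)/n+O(1)$ integers. Since, as recorded just before the lemma, $x',y'=\rho n^{1/2}+O(n^{1/4})$ throughout $R_0$, all these integers are positive and the corresponding $y$ is positive as well for $n$ large, so the ambient constraint $(x,y)\in\mR_+^2$ together with the $O(n)$ uncertainty in $L$ contribute only to the $O(1)$ already present. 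A symmetric computation for the $A$-family, using $x-y=(L-ny)/k$ (affine and decreasing in $y$), gives $2Tk/n+O(1)$ admissible integers $y$.

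Adding the two families and using $2T(n-k)/n+2Tk/n=2T$ yields $|R_0|=2\sqrt\rho n^{1/4}+O(1)$: the $k$-dependence of the two individual counts cancels. Since we assumed $k\le n/2$, the $B$-family alone already has $2T(n-k)/n+O(1)\ge T+O(1)$ points, while each family has at most $2T+O(1)$ points, so
\[
\sqrt\rho n^{1/4}+O(1)\ \le\ |R_0|\ \le\ 4\sqrt\rho n^{1/4}+O(1),
\]
as claimed. The only delicate point is the bookkeeping above: the two families are parametrized by different integer coordinates and the slanted line meets the two grid directions with $k$-dependent densities, so one must treat them separately without forgetting either and must verify that $\mZ_+^2$-points on the line are counted once per family; the analytic content — the two affine identities for $x-y$, and the absorption of the $\lceil\cdot\rceil-1$ rounding and of the $O(n)$ slack in $L$ into the $O(1)$ term — is routine.
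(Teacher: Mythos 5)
Your proof is correct, and it follows the same basic plan as the paper -- split $R_0$ into the $B$-family (parametrized by the integer $x$) and the $A$-family (parametrized by the integer $y$), and count integers in an interval around $L/n$ -- but you push the computation further than the paper does. The paper only proves crude one-sided bounds: for the lower bound it exhibits roughly $\sqrt{\rho}n^{1/4}$ admissible $x$ for $B$-points using the assumption $k\le n/2$ (so that $|y-\tilde L|\le|x-\tilde L|$), and for the upper bound it uses the necessary condition $x\in[\tilde L-\sqrt{\rho}n^{1/4},\tilde L+\sqrt{\rho}n^{1/4}]$ (and likewise for $y$), giving $2\sqrt{\rho}n^{1/4}+O(1)$ per family. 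Your affine identities $x-y=(nx-L)/(n-k)$ and $x-y=(L-ny)/k$ instead give the exact counts $2\sqrt{\rho}n^{1/4}(n-k)/n+O(1)$ and $2\sqrt{\rho}n^{1/4}\,k/n+O(1)$, whose $k$-dependence cancels to yield $|R_0|=2\sqrt{\rho}n^{1/4}+O(1)$; this is strictly sharper than the stated lemma, makes the $k\le n/2$ assumption unnecessary for this step, and your bookkeeping (each point of $R$ is determined by its type and its integer coordinate, grid points contributing one point per type; the $\lceil\cdot\rceil-1$ rounding and the $O(n)$ slack in $L$ absorbed into $O(1)$) is sound. The only mild caution is the slight asymmetry in translating $|x'-y'|\le\sqrt{\rho}n^{1/4}$ into a condition on $x-y$ (the shift is by $+1$ on one side only), but this is within the $O(1)$ you already allow.
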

\begin{proof}
For the moment, let us use the notation $\tilde{L}=L/n=\rho n^{1/2}+O(1)$.
Using this for both $x$ and $y$ would solve the defining equation
$kx+(n-k)y=L$.
Thus for any integer $x \in [\tilde{L}-\frac{1}{2}\sqrt{\rho}n^{1/4}+1,
\tilde{L}+\frac{1}{2}\sqrt{\rho}n^{1/4}-1]$ there is a corresponding
$y$ in the same interval that solves the defining equation. Here we
use the assumption $k \le n-k$, so that $y$ will differ no more from
the center value than $x$. Therefore $|x-y|\le \sqrt{\rho}n^{1/4}-2$
which is enough to ensure the pair accompanied by a $B$ being in $R_0$. The number of
integers in the given interval confirms the lower bound.

Adjusting the above argument, check the approximately double width interval $x \in [\tilde{L} - \sqrt{\rho}n^{1/4}, \tilde{L} +
  \sqrt{\rho}n^{1/4}], x \in \mZ$, which is a necessary condition
for being in $R_0$ and of form $(\cdot,\cdot,B)$. There will be at most one such point in
$R_0$ for each $x$, so $2\sqrt{\rho}n^{1/4}+O(1)$ in total. Now counting the
points on horizontal grid lines (collecting $(\cdot,\cdot,A)$ points), for any $y \in [\tilde{L} - \sqrt{\rho}n^{1/4}, \tilde{L} +
  \sqrt{\rho}n^{1/4}], y \in \mZ$ there will be at most one matching
$x\in\mR$ again. Adding up the two cases we get the upper bound.
\end{proof}

It will be easier to handle a set of points of known size, so let
$R_1\subseteq R_0$ be a subset of size $\sqrt{\rho}n^{1/4}$ (or maybe
$O(1)$ less) of the elements in the middle.

We want to convert our grid representation to the cycle and acquire
the image of $R_1$, that is,
$$
V_1 = \{g(r) ~|~ r\in R_1\}.
$$
To understand this set we scan through the elements of $R_1$, starting
with the one with the
lowest $x$ and increasing (taking direction $A$ before $B$ when passing through
a grid point) and follow where do they map on the cycle. When moving from
one point, $U$, to the next, $U'$, we may encounter the following
configurations, as shown in Figure \ref{fig:uustep}:

\renewcommand{\thesubfigure}{\roman{subfigure}}
\begin{figure}[h]
  \centering
  \hspace{0.03\textwidth}
  \subfloat[ ]{
    \includegraphics[width=0.15\textwidth]{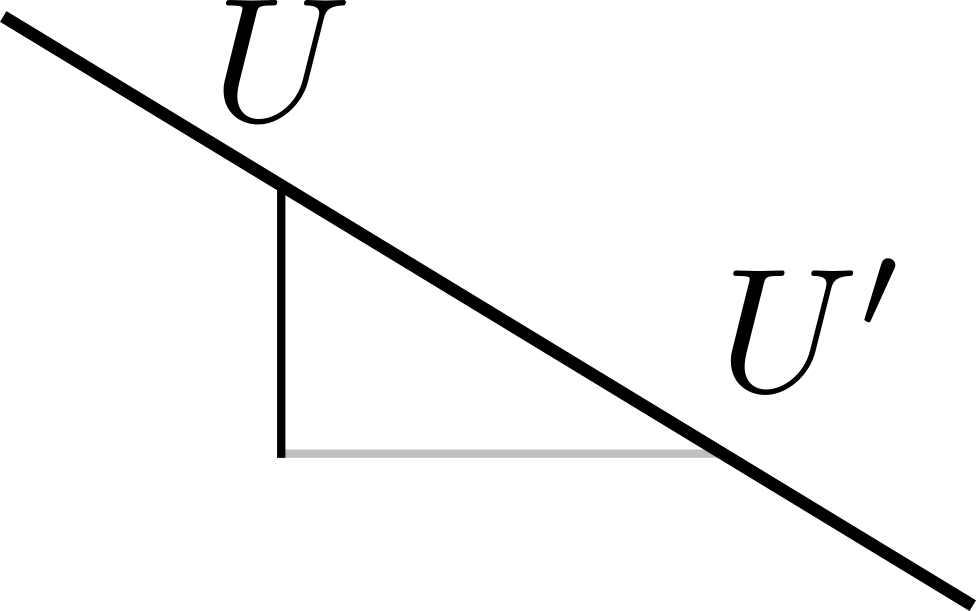}
  }
  \hspace{0.03\textwidth}
  \subfloat[ ]{
    \includegraphics[width=0.15\textwidth]{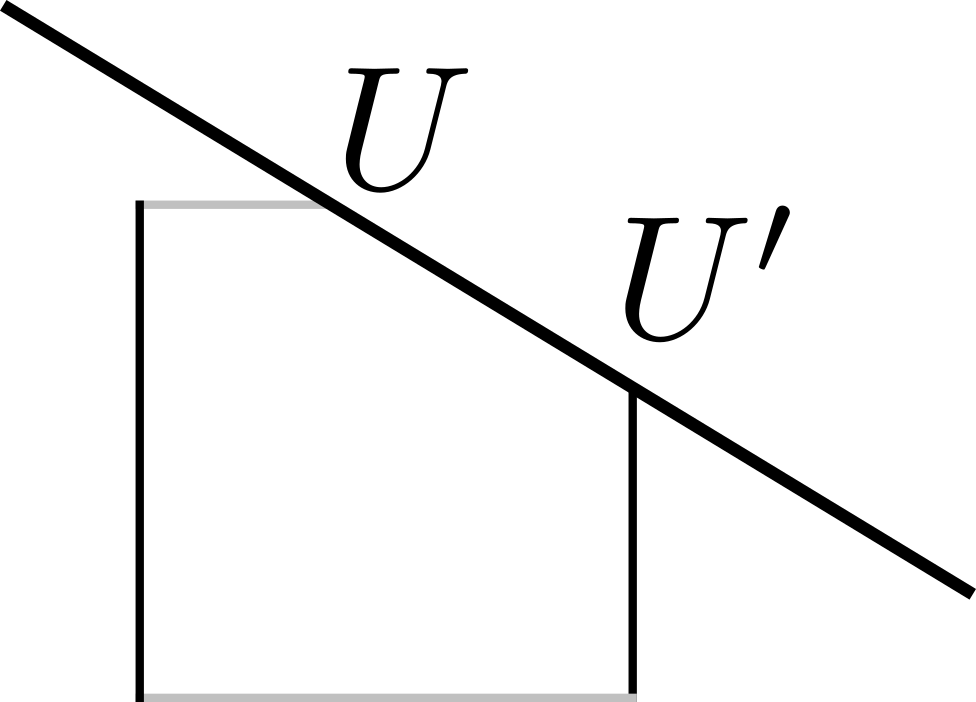}
  }
  \hspace{0.03\textwidth}
  \subfloat[ ]{
    \includegraphics[width=0.15\textwidth]{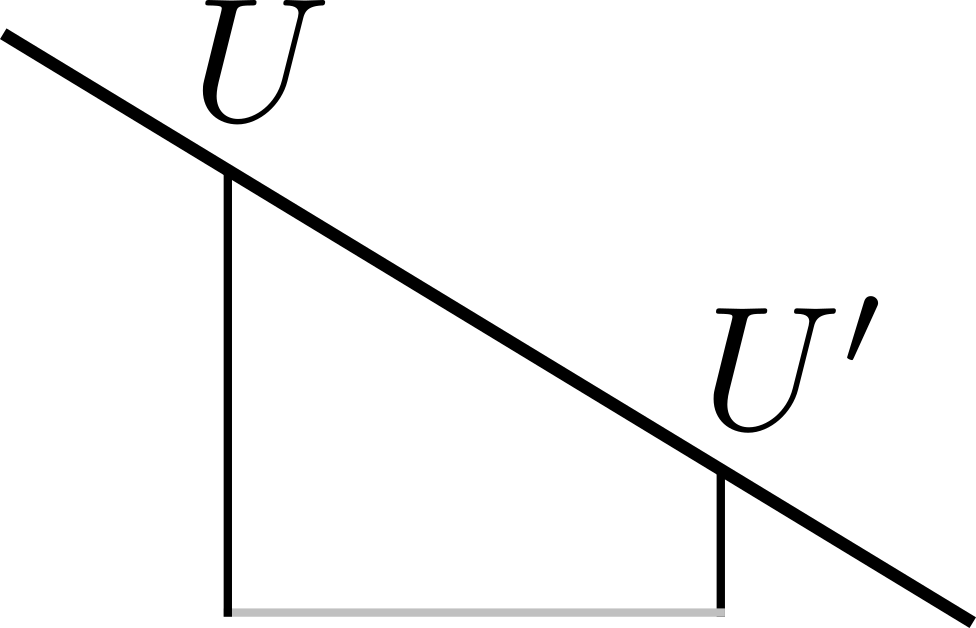}
  }
  \hspace{0.03\textwidth}
  \subfloat[ ]{
    \includegraphics[width=0.15\textwidth]{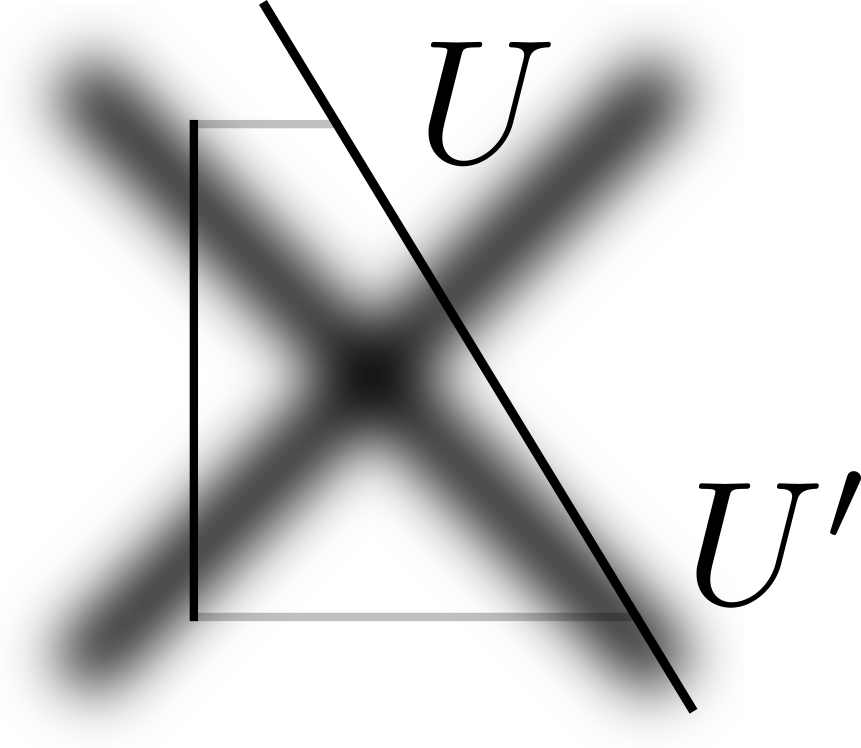}
  }
  \hspace{0.03\textwidth}
  \caption{Possible transitions depicted as $U\rightarrow U'$ while
    scanning through $R_1$.}
  \label{fig:uustep}
\end{figure}
\bi
\item In case (i) we see that the final few $l$ steps (out of $L$) are
  starting on the $B$ arc for $U$ and starting on the $A$ arc for
  $U'$. Consequently, $U'$ can be reached from $U$ by exactly $k$
  counter-clockwise steps on the cycle.
  
\item In case (ii) we almost reach the next grid point, some $l$ steps are
  missing on the $A$ arc for $U$ and also $l$ steps are missing on the $B$
  arc for $U'$. This means, again, that $U'$ can be reached from $U$ by exactly $k$
  counter-clockwise steps on the cycle.
  
\item In case (iii) we are on the $B$ arc for both $U$ and $U'$ but we had one
  more horizontal $A$ segment for $U'$ (representing $k$ steps) which is
  missing from the height. Therefore we get the same, $U'$ can be reached from $U$ by exactly $k$
  counter-clockwise steps on the cycle.
  
\item Passing through a grid point can be treated as the special case of
  either (i) or (ii), with the same consequence. Note that case (iv) can not happen due to our assumption of $k\le
  n/2$. 
\ei
To sum up, we can generate the set $V_1\subset V$ on the
cycle corresponding to $R_1$ by finding the first vertex, then taking
jumps of $-k$ (modulo $n$) for $|R_1|-1$ more steps. We want to ensure
the elements of $V_1$ are spread out enough. 
\begin{lemma}
  \label{lm:biggaps}
  There exist constants $\gamma_2 > 0$ and $\gamma_3 > \frac 1 2$ such that
  the following holds for large enough $n$. For a uniform choice of $k\in [2,3,\ldots,n-2]$ with
  probability at least $\gamma_2$ we have
  \begin{equation}
    \label{eq:farv}
    \forall v,v'\in V_1,~ v\neq v':~|v-v'|\geq \frac{\gamma_3}{\sqrt{\rho}} n^{3/4}.
  \end{equation}
\end{lemma}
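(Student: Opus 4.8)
The plan is to turn \eqref{eq:farv} into a counting problem about the residues $mk\bmod n$ and then to bound the number of ``bad'' choices of $k$ by a union bound, organized over \emph{reduced} fractions, so that the relevant count is controlled by the mean value $\tfrac1M\sum_{m\le M}\phi(m)/m\to\tfrac6{\pi^2}$ rather than by a naive estimate that would be too lossy. \textbf{Step 1 (reformulation).} Recall from the discussion just above the lemma that $V_1$ is produced from a first vertex $v_0$ by $|R_1|-1$ successive jumps of $-k$ modulo $n$, so $V_1=\{v_0-jk\bmod n:\ 0\le j\le |R_1|-1\}$ and the cycle-distance between its elements indexed by $i<j$ is exactly the distance of $(j-i)k$ to the nearest multiple of $n$. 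Putting $M:=|R_1|-1\le\sqrt{\rho}\,n^{1/4}$ and $L:=\tfrac{\gamma_3}{\sqrt{\rho}}\,n^{3/4}$, property \eqref{eq:farv} is equivalent to
\[
\forall\,1\le m\le M,\ \forall\,p\in\mZ:\qquad |mk-pn|\ge L,
\]
and I call $k$ \emph{bad} when this fails. The task is then to show $\mP(k\text{ bad})\le 1-\gamma_2$ for a fixed $\gamma_2>0$, with $k$ uniform on $\{2,\dots,n-2\}$.

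\textbf{Step 2 (gcd reduction and union bound).} For $k\in\{2,\dots,n-2\}$, $1\le m\le M$ and $n$ large, any $p$ with $|mk-pn|<L$ must lie in $\{0,1,\dots,m\}$; the boundary cases $p=0$ and $p=m$ force $k<L$ or $n-k<L$, excluding only $O(L)$ values of $k$ which I set aside. For $1\le p\le m-1$ with $d=\gcd(p,m)>1$ one has $|(m/d)k-(p/d)n|<L/d\le L$, so every remaining witness may be taken with $\gcd(p,m)=1$. Hence, up to the $O(L)$ exceptional $k$,
\[
\{k\text{ bad}\}\subseteq\bigcup_{m=1}^{M}\ \bigcup_{\substack{1\le p\le m-1\\ \gcd(p,m)=1}}\bigl\{k\in\mZ:\ |mk-pn|<L\bigr\}.
\]
The inner set is an open interval of $k$-length $2L/m$, hence holds at most $\lfloor 2L/m\rfloor+1$ integers, and the inner union has $\phi(m)$ terms ($\phi$ being Euler's totient). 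Therefore
\[
\#\{k\text{ bad}\}\le O(L)+\sum_{m=1}^{M}\phi(m)\Bigl(\frac{2L}{m}+1\Bigr)=2L\sum_{m=1}^{M}\frac{\phi(m)}{m}+O(M^2+L).
\]

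\textbf{Step 3 (number theory and conclusion).} Using the classical estimate $\sum_{m\le M}\phi(m)/m=\tfrac6{\pi^2}M+O(\log M)$ (and $\sum_{m\le M}\phi(m)=O(M^2)$), together with $LM\le\gamma_3 n$, $M^2=O(n^{1/2})$ and $L\log M=O(n^{3/4}\log n)$, one obtains
\[
\#\{k\text{ bad}\}\le\frac{12}{\pi^2}\,\gamma_3\,n+O\!\bigl(n^{3/4}\log n\bigr),
\]
so $\mP(k\text{ bad})\le\tfrac{12}{\pi^2}\gamma_3+o(1)$ as $n\to\infty$. Since $\tfrac{\pi^2}{12}>\tfrac12$, I fix any $\gamma_3\in(\tfrac12,\tfrac{\pi^2}{12})$ — e.g.\ $\gamma_3=\tfrac35$, for which $\tfrac{12}{\pi^2}\gamma_3=\tfrac{36}{5\pi^2}<\tfrac34$ — and then for $n$ large enough $\mP(k\text{ bad})$ is bounded away from $1$, so its complement is bounded below by the desired $\gamma_2>0$.

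\textbf{Main obstacle.} The difficulty I anticipate is that the \emph{naive} union bound is too weak: ranging over all $(m,p)$ with $0\le p\le m\le M$ uses about $m$ residues $p$ per $m$, each contributing $\sim 2L/m$ bad values of $k$, so the leading term is $\sum_{m\le M}m\cdot\tfrac{2L}{m}=2LM\approx 2\gamma_3 n$, which exceeds $n$ as soon as $\gamma_3>\tfrac12$ and is thus vacuous. What rescues the estimate is the arithmetic reduction to coprime pairs: only $\phi(m)$ of the $m$ residues with denominator $m$ yield genuinely new intervals, so the count becomes $2L\sum_{m\le M}\phi(m)/m\sim\tfrac6{\pi^2}\cdot 2LM=\tfrac{12}{\pi^2}LM$, i.e.\ the constant $2$ in front of $LM\le\gamma_3 n$ shrinks to $\tfrac{12}{\pi^2}\approx1.22$, which finally leaves a genuine interval $(\tfrac12,\tfrac{\pi^2}{12})$ for $\gamma_3$. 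Beyond this, only routine bookkeeping remains: the $O(n^{3/4})$ values of $k$ near $0$ or $n$ (the cases $p\in\{0,m\}$) and the harmless replacement of $\lfloor 2L/m\rfloor+1$ by $2L/m+1$.
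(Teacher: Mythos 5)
Your proof is correct and follows essentially the same route as the paper: reduce \eqref{eq:farv} to excluding $k$ from intervals of width $2L/m$ around the fractions $pn/m$, use the coprimality reduction so only $\varphi(m)$ intervals per $m$ count, apply $\sum_{m\le M}\varphi(m)/m\sim\tfrac{6}{\pi^2}M$, and pick $\gamma_3\in(\tfrac12,\tfrac{\pi^2}{12})$. The only difference is presentational (a union bound over reduced fractions versus the paper's incremental merging of intervals, and setting aside the $p\in\{0,m\}$ boundary $k$'s rather than folding them into the $i=0$ interval), with identical leading constants and error terms.
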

\begin{proof}
  We will use $\gamma_3$ as a parameter for now which we will specify
  later. We will consider a uniform choice $k\in [1,2,\ldots,n]$ for
  convenient calculations, clearly this does not change the
  probability asymptotically.

  Two elements of $V_1$ get close if after some repetitions of
  the $k$-jumps, we get very close to the start (after a few full turns). More precisely the
  condition is violated iff
  $$
  \exists 1\le m \le |R_1|-1 \quad |mk| < \frac{\gamma_3}{\sqrt{\rho}} n^{3/4}.
  $$
  Our goal is to have a $k$ so that this not happen. For a fixed $m$
  this excludes $k$ from the intervals
  $$
  \left(\frac{in - \frac{\gamma_3}{\sqrt{\rho}} n^{3/4}}{m},\frac{in +
      \frac{\gamma_3}{\sqrt{\rho}} n^{3/4}}{m}\right),\quad i=0,1,\ldots,m-1.
  $$
  To simplify our calculations we will treat these intervals as real
  intervals on the cycle (rather than an interval of integers). Length
  and number of integers contained differ by at most one, we will
  correct for this error at the end.

  We need to merge these intervals for all $1\le m \le |R_1|-1$. We
  imagine doing this by collecting the intervals as increasing
  $m$. Observe that if $\gcd(i,m)=c>1$, then we already covered the
  interval around $\frac{in}{m}$ when encountering
  $\frac{(i/c)n}{m/c}$, and by a wider interval. That is, we only
  have to count those $i$ where $\gcd(i,m)=1$. Therefore the
  total newly covered area at step $m$ is at most
  $$
  \frac{2\gamma_3}{\sqrt{\rho}} n^{3/4} \frac{\varphi(m)}{m},
  $$
  where $\varphi$ denotes the classical Euler function. Once we add
  these up, and use the summation approximation
  \cite{walfisz1963weylsche}
  we get
  \begin{equation}
    \label{eq:sum_interval}
    \frac{2\gamma_3}{\sqrt{\rho}} n^{3/4}\sum_{m=1}^{|R_1|-1}\frac{\varphi(m)}{m} = 2\gamma_3\frac{6}{\pi^2}n
    + O(n^{3/4}\log n),
  \end{equation}
  knowing that $|R_1| = \sqrt{\rho}n^{1/4}+O(1)$. When we switched from integer
  counts to approximation by interval lengths, the total error is at
  most 1 per interval, that is,
  \begin{equation*}
    \sum_{m=1}^{|R_1|-1} m = O(n^{1/2}),
  \end{equation*}
  which is negligible compared to the quantities of \eqref{eq:sum_interval}.
  Consequently \eqref{eq:sum_interval} is an upper bound on the number of $k$ that should be
  excluded. Let us therefore choose $\gamma_3>1/2$ so that the
  coefficient of $n$ above is strictly less than 1. Then there is
  still strictly positive probability to pick a good $k$, in
  particular
  $$
  \gamma_2 = 1 - \frac{12\gamma_3}{\pi^2}-\varepsilon
  $$
  is adequate for any small $\varepsilon > 0$.
\end{proof}

\section{Including diffusive behavior}
\label{sec:diffusion}

So far we have understood the position of the chain after $L$ moves
from the first grid point.
Now we want to analyze the true Markov chain dynamics where moving or
staying in place is also random.
In $V_1$ we have a large number of positions, hopefully
different and separated enough, and we can bound the probability of reaching
the corresponding elements in $R_1$.

For technical reasons to follow, we want to avoid points in $V_1$ that
are very close to the hubs so define
\begin{align}
  V_2 &= \{v\in V_1 ~|~ |v-k|,|v-n| > 4\sqrt{\rho}n^{3/4}\sqrt{\log n}\},\nonumber\\
  W &= \{ w \in V ~|~ \exists v\in V_2,~|w-v| < \gamma_3\sqrt{\rho} n^{3/4}/2\},\label{eq:wdef}\\
  R_2 &= \{r\in R_1 ~|~ g(r) \in V_2\}.\nonumber
\end{align}
We would like to emphasize that in the favorable case when
\eqref{eq:farv} holds (ensured with positive probability by Lemma
\ref{lm:biggaps}), we have
$|V_2|, |R_2| = \sqrt{\rho}n^{1/4} + O(\sqrt{\log n})$ and when
$\rho\le 1$ this also implies
$|W| = \gamma_3 \rho n + O(n^{3/4}\sqrt{\log n})$.

At most vertices, a $Geo(1/2)$ distribution controls when to
step ahead, so let us choose some
$$T = 2\rho n^{3/2} + O(n) \in 2\mZ$$
and analyze
$X(T)$. Oversimplifying the situation at first, in $T$ steps the
chain travels $T/2$ in expectation, $O(n)$ to reach the origin grid point,
$\rho n^{3/2}+O(n)$ afterwards, which is exactly the case analyzed. We have
control of the probability of $\approx \sqrt{\rho} n^{1/4}$ of the
expected endpoints, and we will have a diffusion $\approx \sqrt{\rho}n^{3/4}$ around them, which together will provide a nicely spread out distribution.

However, there are some non-trivial details hidden here. The most important caveat is that when visiting the hubs, the
distribution of the time spent is not independent of the direction
taken. In fact, when arriving at a hub, say at vertex $a_k$, with probability $1/2$ there
is a single step, going to $b_1$. Otherwise, some loops, jumps are taken
between the hubs before moving on, which tells that with probability $1/4,1/4$ the
chain continues to $a_1,b_1$, in $1+Geo(1/2)$ steps. Let us combine
all the heuristics and work out the details in a formally precise way.

We are going to describe a procedure to generate the Markov chain and the position
$X(T)$. If $X(0)$ is $\lambda$ steps before one of the hubs, thus the origin grid point, fix
$L=T/2-\lambda = \rho n^{3/2}+O(n)$, define $R$ with this value in \eqref{eq:Rdef}.
Assume we are given an infinite i.i.d.\ series of fair
coin tosses which may either tell go ($1$) or do nothing ($0$).
We perform the following steps.
\begin{itemize}
\item Choose the exit point $r\in R$, with appropriate
  probability $\mP(E_r)$. 
\item Choose one of the possible tracks $\xi$ reaching $r$ (with the
  appropriate conditional probability).
\item Generate a series of coin tosses $c_0$ of length $T-(x'+y'+1)$,
  which is the major part of the movement of the chain.
\item Complement the above series depending on the
  track. Following the Markov chain using the beginning of $c_0$, when we reach a hub
  where the direction should be continued according to $\xi$ ($AA$ or $BB$), insert an extra
  $0$ symbol (correcting for the $1+Geo(1/2)$ waiting time
  distribution there). Similarly, when we reach a hub where the
  direction changes ($AB$ or $BA$), with probability $2/3$ insert a
  $1$ (meaning an instant step), with probability $1/3$ insert a $0$
  (for the $1+Geo(1/2)$ case). If we encounter a grid point further than
  $r$, we freely choose the direction together with the inserted symbol with
  probabilities $1/4,1/2,1/4$ for the 3 cases we had. Let the elongated series be $c_1$, the
  sequence of only the added symbols be $c_h$.
\end{itemize}
Let us use the notation $|c_0|$ for the length of $c_0$ and $\sum(c_0)$
for the number of $1$-s in $c_0$ (and similarly for the other
sequences). Let us also introduce $\tau = |c_1|$. Therefore at the
end of the procedure above we arrive at $X(\tau)$. More importantly, $\tau$
matches $T$ very well as stated below.
\begin{lemma}
  \label{lm:tauT}
  For $r\in R_2$ we have
  $$
  \mP(\tau = T) = 1 - O(n^{-4}).
  $$
\end{lemma}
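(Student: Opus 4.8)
The plan is to express $\tau$ as a deterministic function of the quantities appearing in the generating procedure and then show that the only source of discrepancy from $T$ is a large-deviation event for the number of direction changes experienced by the chosen track. Recall that $c_1$ is obtained from $c_0$ by inserting one extra symbol at each hub passage dictated by $\xi$ up to the exit point $r$, plus possibly more insertions at grid points beyond $r$. So first I would account exactly for $|c_1|$ in the favorable scenario where the chain, driven by the core coin sequence $c_0$, reaches $r$ before exhausting $c_0$ and without ever wandering past $r$: in that case $c_1 = c_0$ with exactly $x'+y'+1$ inserted symbols (one per arc traversal, i.e.\ per hub visit along the track to $r$), so $\tau = |c_1| = |c_0| + (x'+y'+1) = T$ identically. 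Thus $\{\tau\neq T\}$ is contained in the union of the ``bad'' events: (a) the core sequence $c_0$ runs out before the chain has completed the $x'+y'$ arc traversals reaching $r$; or (b) the chain reaches a grid point strictly beyond $r$ (so that extra symbols with the $1/4,1/2,1/4$ rule get inserted, breaking the exact count).

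Next I would bound the probability of each bad event. For (a): the chain needs roughly $kx' + (n-k)y' = L = \rho n^{3/2} + O(n)$ genuine forward steps, each governed by a fair coin, so the number of coin tosses consumed is a sum of $L$ i.i.d.\ $Geo(1/2)$ waiting times, with mean $2L \approx T$ and we have allotted $|c_0| = T - (x'+y'+1)$; wait — more carefully, $c_0$ governs only the non-inserted steps, of which there are exactly $2L$ in expectation across the whole run to $r$; since $|c_0| = T - (x'+y'+1) = 2L + O(n)$, the shortfall event requires the $2L$-fold $Geo(1/2)$ sum (equivalently, the number of zeros before the $L$-th one in a fair Bernoulli stream) to exceed its mean by $\Omega(n)$. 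Hoeffding/Bernstein for geometric sums, with variance $\Theta(L) = \Theta(n^{3/2})$, gives a deviation probability of order $\exp(-c n^2 / n^{3/2}) = \exp(-c n^{1/2})$, which is $O(n^{-4})$ and then some. For (b): reaching a grid point beyond $r$ within $T$ steps means the chain overshoots the target distance $L$ by at least $\min(k, n-k) \ge \gamma_2' n$ (the step size of one more arc, using $k\in[2,n-2]$), i.e.\ travels $\ge L + \Omega(n)$ forward steps in $\le T/2 + O(n)$ moves; by the same geometric-sum concentration this is again $O(n^{-4})$. Actually the cleanest phrasing uses the restriction $r\in R_2$: since $g(r)$ is bounded away from the hubs by $4\sqrt\rho n^{3/4}\sqrt{\log n}$, before overshooting past the next grid point the chain would first have to drift an extra $\Omega(n)$, so the margin is comfortably larger than the diffusive fluctuation $\Theta(n^{3/4})$ and the Gaussian tail at scale $\sqrt{\log n}$ already buys $n^{-\Omega(\log n)}$, far inside $O(n^{-4})$.

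The main obstacle I expect is bookkeeping the insertion rule precisely enough to be sure that, on the complement of (a) and (b), the identity $\tau = T$ holds with no leftover error — in particular checking that the ``$\lambda$ steps before the origin grid point'' offset, the $x'+y'+1$ hub insertions, and the definition $L = T/2 - \lambda$ interlock exactly, and that no insertion happens at the very last (exit) segment in a way that shifts the count by one. Once that deterministic identity is nailed down, the probabilistic part is just two applications of a geometric-sum tail bound, and since both are $\exp(-\Omega(n^{1/2}))$ rather than merely $O(n^{-4})$, there is ample room; I would state the weaker $O(n^{-4})$ bound as in the lemma because that is all that is needed downstream (a union bound over the $O(n^{1/4})$ points of $R_2$ must stay $o(1)$). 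I would also remark that the hypothesis $r\in R_2$ is used only to keep the exit point well inside an arc; the bound in fact holds for all $r\in R_1$, but we do not need this.
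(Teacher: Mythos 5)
Your skeleton matches the paper's proof (a deterministic identity $\tau=|c_0|+|c_h|=T$ on a good event where the number of hub passages is exactly $x'+y'+1$, plus a concentration bound for the coin sequence), but the quantitative core is wrong, and the error is exactly at the point where the hypothesis $r\in R_2$ matters. The margin protecting against undershoot (not reaching the hub at $r'$) and overshoot (reaching the hub beyond $r$) is \emph{not} $\Omega(n)$: it is the distance of $g(r)$ to the two hubs along its arc, which the definition of $V_2$ (hence $R_2$) only guarantees to exceed $4\sqrt{\rho}\,n^{3/4}\sqrt{\log n}$. The point $r$ sits somewhere inside an arc, so overshooting past the next grid point does not require an extra full arc of length $\min(k,n-k)$, only the remaining piece of the current arc, which can be as small as the $n^{3/4}\sqrt{\log n}$ threshold; the same holds for the undershoot margin, since $kx'+(n-k)y'=L-d$ with $d$ possibly of that order. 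Consequently your claimed tails $\exp(-\Omega(n^{1/2}))$ (and later ``$n^{-\Omega(\log n)}$'') are unjustified: with fluctuations of $\sum(c_0)$ of order $\sqrt{T}\sim n^{3/4}$ and a margin of order $n^{3/4}\sqrt{\log n}$, Hoeffding gives only a polynomial bound, $n^{-c}$ with $c$ determined by the constant in front of $n^{3/4}\sqrt{\log n}$ --- this is precisely why the paper takes deviation $3\sqrt{\rho}\,n^{3/4}\sqrt{\log n}$ (leaving slack against the $4\sqrt{\rho}\,n^{3/4}\sqrt{\log n}$ in the definition of $V_2$ to absorb the $O(n^{1/2})$ insertion and offset terms) and why the lemma states only $O(n^{-4})$.

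The misunderstanding is confirmed by your closing remark that the bound ``in fact holds for all $r\in R_1$'': this is false. If $g(r)$ lies within $O(n^{3/4})$ of a hub, the diffusive fluctuation of $\sum(c_0)$, which is of that same order, makes the chain land on the wrong side of that hub with probability bounded away from $0$, so the number of inserted symbols differs from $x'+y'+1$ and $\mP(\tau=T)$ is not even $1-o(1)$. Restricting to $R_2$ is not a cosmetic convenience but the very reason the bad event has probability $O(n^{-4})$. With the margin corrected to the $n^{3/4}\sqrt{\log n}$ scale and the tail recomputed accordingly (as in the paper, via Hoeffding for $\sum(c_0)$ and the observation that the $O(n^{1/2})$ inserted steps cannot push the chain past the next grid point), your argument becomes essentially the paper's proof.
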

\begin{proof}
  In $c_0$ for the number of $1$ symbols we have
  $$
  \left|\sum(c_0)-(L+\lambda)\right| \le \left|\sum(c_0)-\frac{1}{2}(T-(x'+y'+1))\right| + \left|\frac{1}{2}(T-(x'+y'+1))-(L+\lambda)\right|.
  $$
  The second term on the right hand side is $O(n^{1/2})$ by
  the definitions. For the term before we can use standard tail probability
  estimates for the Binomial distribution (based on Hoeffding's
  inequality). Merging the two error terms we get
  \begin{equation}
    \label{eq:sumc0_conc}
    \mP\left(\left|\sum(c_0)-(L+\lambda)\right| > 3\sqrt{\rho}n^{3/4}\sqrt{\log n}\right) = O(n^{-4}).
  \end{equation}
  Let us denote this bad event of the left side above by $\mathcal{B}$ for future use.
  Assume that this event does not occur, and $\sum(c_0)$ is within the
  error bound $3\sqrt{\rho}n^{3/4}\sqrt{\log n}$. This means that the Markov
  chain takes the first $\lambda$ steps to the origin and then
  $\approx L$ steps within the stated bounds.
  
  By the definition of $R_2$ this concentration tells that even only
  considering the $c_0$ steps we reach the grid line segment where $r$
  lies.
  On the way we pass through $x'+y'+1$ hubs, which results in
  $x'+y'+1=O(n^{1/2})$ entries in $c_h$. Conversely, inserting this
  $O(n^{1/2})$ steps into $c_0$ the upper bound ensures that we will
  not reach the next grid point (or the hub once more, in other words). Consequently, $|c_h|=x'+y'+1$.

  Therefore in this case we have $\tau = |c_1| = |c_0| + |c_h| = T$,
  which we wanted to show, the exceptional probability is controlled
  by \eqref{eq:sumc0_conc} which matches the claim.
\end{proof}
  
\begin{lemma}
  \label{lm:w_prob_bound}
  For any
  $w\in W$ we have
  \begin{equation*}
    \mP(X(T)=w) \ge \frac{\gamma_4}{\rho n},
  \end{equation*}
  for an appropriate global constant $\gamma_4>0$.
\end{lemma}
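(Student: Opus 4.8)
The plan is to reach $w$ in two stages: first steer the chain onto the grid point $r$ whose cycle image is the nearby vertex $v\in V_2$, then let the diffusive fluctuation in the number of ``go'' symbols carry it the remaining $\delta$ steps along the arc, bounding each stage and multiplying. Concretely, fix $w\in W$; by \eqref{eq:wdef} there is $v\in V_2$ with $|w-v|<\gamma_3\sqrt\rho\,n^{3/4}/2$, and since the dynamics push the position monotonically forward along its track, let $\delta\in(-n/2,n/2)$ be the integer with $w\equiv v+\delta$ in the forward direction, so $|\delta|=|w-v|<\gamma_3\sqrt\rho\,n^{3/4}/2$. As $V_2\subseteq V_1$, pick $r\in R_1$ with $g(r)=v$; then $r\in R_2\subseteq R_1\subseteq R_0$, so \eqref{eq:PEr-bound} gives $\mP(E_r)\ge\gamma_1/(\sqrt\rho\,n^{1/4})$. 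Running the generation procedure of Section~\ref{sec:diffusion} and conditioning on its first step selecting this $r$ (probability $\mP(E_r)$), it remains to prove $\mP(X(T)=w\mid r\text{ selected})\ge\beta_1/\sqrt T$ for a constant $\beta_1>0$.

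Next I would identify the event $\{X(T)=w\}$ with one about $\sum(c_1)$, working on the favorable set $\mathcal B^c$ from the proof of Lemma~\ref{lm:tauT}, of conditional probability $1-O(n^{-4})$. There $\tau=T$, $|c_h|=x'+y'+1=O(n^{1/2})$ and $|\sum(c_0)-(L+\lambda)|\le 3\sqrt\rho\,n^{3/4}\sqrt{\log n}$, hence $|\sum(c_1)-(L+\lambda)|<4\sqrt\rho\,n^{3/4}\sqrt{\log n}$. Since $v\in V_2$ lies farther than $4\sqrt\rho\,n^{3/4}\sqrt{\log n}$ from both hubs, over these last steps the chain stays on the arc through $v$ and reaches no further grid point, and it is at $v$ exactly after $L+\lambda$ ``go'' symbols; therefore $X(T)=v+(\sum(c_1)-(L+\lambda))$ on the cycle. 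In particular $\{\sum(c_1)=L+\lambda+\delta\}$ forces $|\sum(c_0)-(L+\lambda)|\le|\delta|+(x'+y'+1)<3\sqrt\rho\,n^{3/4}\sqrt{\log n}$ for $n$ large, so that event lies in $\mathcal B^c$, where it implies $X(T)=v+\delta=w$. Hence $\mP(X(T)=w\mid r)\ge\mP(\sum(c_1)=L+\lambda+\delta\mid r)$.

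For this last quantity a local limit estimate suffices. Given $r$, $c_0$ is a block of $M:=T-(x'+y'+1)$ fair coins, so $\sum(c_0)\sim Binom(M,1/2)$, while $\sum(c_h)$ — a sum of $x'+y'+1$ independent bits, each constantly $0$ or Bernoulli$(2/3)$ — is independent of $c_0$ and supported on $[0,O(n^{1/2})]$. For every value $s$ of $\sum(c_h)$, the integer $L+\lambda+\delta-s$ differs from $M/2$ by less than $\gamma_3\sqrt\rho\,n^{3/4}/2+O(n^{1/2})$, i.e.\ by at most a fixed multiple of $\sqrt M=\sqrt{2\rho}\,n^{3/4}(1+o(1))$; by log-concavity of the binomial together with $\mP(Binom(M,1/2)=\lfloor M/2\rfloor)\sim\sqrt{2/(\pi M)}$ this gives $\mP(Binom(M,1/2)=j)\ge\beta_2/\sqrt M$ uniformly over such $j$, for a constant $\beta_2>0$. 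Averaging over $s$ yields $\mP(\sum(c_1)=L+\lambda+\delta\mid r)\ge\beta_2/\sqrt M\ge\beta_1/\sqrt T$ for $n$ large, and multiplying,
\[
\mP(X(T)=w)\ \ge\ \mP(E_r)\cdot\frac{\beta_1}{\sqrt T}\ \ge\ \frac{\gamma_1}{\sqrt\rho\,n^{1/4}}\cdot\frac{\beta_1}{\sqrt{2\rho}\,n^{3/4}}\,(1+o(1))\ \ge\ \frac{\gamma_4}{\rho n}
\]
for a suitable $\gamma_4>0$ and $n$ large.

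I expect the main obstacle to be the middle step, i.e.\ extracting from the fairly intricate generation procedure the clean identity $X(T)=v+(\sum(c_1)-(L+\lambda))$ valid on a high-probability event: this needs three scales to cooperate simultaneously — the Hoeffding window around $L+\lambda$ must be wide enough to contain the target offset $\delta=O(n^{3/4})$, yet narrower than the hub-avoidance margin $4\sqrt\rho\,n^{3/4}\sqrt{\log n}$ built into $V_2$ (so no hub is crossed and the identity survives), and $L+\lambda+\delta$ must lie only $O(1)$ standard deviations from the mean of $\sum(c_1)$ (so the local estimate does not degrade). Once this is in place, the binomial point-mass bound in the last step is routine.
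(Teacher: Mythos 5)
Your proposal is correct and follows essentially the same route as the paper's proof: condition on the exit point $r\in R_2$ with $g(r)=v$ near $w$ and use \eqref{eq:PEr-bound}, work on the complement of the bad event from Lemma \ref{lm:tauT} where the insertion bookkeeping gives $X(T)=v+\left(\sum(c_1)-(L+\lambda)\right)$, exploit the conditional independence of $c_h$ from $c_0$, and conclude with a pointwise binomial estimate at $O(1)$ standard deviations (the paper does this via the Stirling/local-CLT formula of Spencer rather than your log-concavity argument), then multiply the two bounds. One small caution: log-concavity together with the central mass $\sim\sqrt{2/(\pi M)}$ alone does not force the uniform bound $\beta_2/\sqrt{M}$ at $j$ a constant number of standard deviations away (a log-concave pmf can drop sharply past its mode); you should instead bound $\mP\left(Binom(M,1/2)=j\right)\ge \mP\left(Binom(M,1/2)\in[j,j+\lceil\sqrt{M}\rceil]\right)/(\lceil\sqrt{M}\rceil+1)$ using unimodality and a CLT interval bound — the same averaging trick the paper uses in Section \ref{sec:grid} — or simply quote the Stirling estimate as the paper does; this is a one-line repair, not a gap.
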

\begin{proof}
By the definition of $W$ there is a
$v = g(r)\in V_2$ with $|v-w| \le \frac{\gamma_3\sqrt{\rho}}{2}
n^{3/4}$ (if multiple, we choose one). We use the
procedure above to actually bound the probability for $X(\tau)$, but
by Lemma \ref{lm:tauT} we know this is correct up to $O(n^{-4})$ error
which is enough for our case. In the
process let us consider the case where $E_r$ is chosen and also
some track $\xi$ is fixed.

With these conditions, let us analyze the dependence structure of the
step sequences.
For $c_1$, the positions of the additions strongly depend on
$c_0$. However, we know exactly what hubs and which turns we
are going to take, which means $c_h$ is independent of $c_0$ (assuming
$\bar{\mathcal{B}}$), only their interlacing depends on both.

Now, first drawing and fixing $c_h$ we know by $\sum(c_h)$ precisely how many $1$-s do we need from
$c_0$ to exactly hit $w$ (still conditioning on $E_r$ and $\xi$). Let
this number be $s$, for which we clearly
have $|s-\rho n^{3/2}| \le \gamma_3\sqrt{\rho} n^{3/4}/2 + O(n^{1/2})$. The length of
$c_0$ is $T'=T-(x'+y'+1) = T + O(\sqrt{n})$. We have to approximate this
Binomial probability of $1$-s in $c_0$. This is a tedious calculation based on
the Stirling formula, we refer to \cite{spencer2014asymptopia} where it is shown that
$$
{T' \choose s} = (1+o(1))\frac{2^{T'}}{\sqrt{T'\pi/2}} \exp\left(-\frac{(T'-2s)^2}{2T'}\right),
$$
if $|T'/2-s| = o(T'^{2/3})$ which clearly holds in our case.
Substituting the variables we get the bound
$$
{T' \choose s} \frac{1}{2^{T'}} \ge (1+o(1))
\frac{1}{\sqrt{\pi\rho}n^{3/4}}
\exp\left(-\frac{\gamma_3^2}{4}+O(n^{-1/4})\right) \ge \frac{\beta_1}{\sqrt{\rho}n^{3/4}},
$$
for some constant $\beta_1>0$ and $n$ large enough. Thus, for
the conditional probability of interest we get
$$
\mP(X(T)=w ~|~E_r,\xi,c_h) \ge \frac{\beta_1}{\sqrt{\rho}n^{3/4}} -
\mP(\mathcal{B})
\ge \frac{\beta_2}{\sqrt{\rho}n^{3/4}}
$$
for any constant $\beta_1 > \beta_2 >0$ and $n$ large enough.

Observe that we have the same lower bound for $\mP(X(T)=w ~|~E_r)$
as it is a mixture of the conditional probabilities above, so we can
average out through $\xi$ and $c_h$. Finally, combining with
\eqref{eq:PEr-bound} we arrive at
\begin{equation*}
  \mP(X(T)=w) \geq \mP(X(T)=w ~|~E_r)\mP(E_r) \ge
  \frac{\gamma_4}{\rho n},
\end{equation*}
with an appropriate constant $\gamma_4 >0$.
\end{proof}

\section{Global mixing}
\label{sec:mixproof}

We now turn to evaluating the mixing metrics of our Markov chain. In
order to establish the upper bound on the mixing time initially
claimed in Theorem \ref{thm:main_upper_bound} we fix $\rho=1$ and
$T=2\lceil\rho n^{3/2}\rceil$ for this
section and use previous results using these parameters.
We will drop $X$ from indices and arguments in \eqref{eq:ddef},
\eqref{eq:tmixdef} when clear from the context.

An alternative of $d(t)$ compares the distribution of the Markov chain
when launched from two different starting points:
$$
\bar{d}(t) := \sup_{X^1(0),X^2(0)\in V} \|\cL(X^1(t))-\cL(X^2(t))\|_{\rm TV}.
$$
It is known how this compares with $d(t)$, we have the inequalities
$d(t) \le \bar{d}(t) \le 2d(t)$, moreover this variant is submultiplicative, $\bar{d}(s+t)\le\bar{d}(s)\bar{d}(t)$, see
\cite[Chapter~4]{levin2017markov}. We can quantify this distance for
our problem as follows.
\begin{lemma}
  \label{lm:dbar_bound}
  Assume that $n$ is large enough and $k$ is such that \eqref{eq:farv} holds.
  Then we have
  \begin{equation}
    \label{eq:dbar_bound}
    \bar{d}(T) \le 1 - \gamma_5,
  \end{equation}
  for some global constant $\gamma_5>0$.
\end{lemma}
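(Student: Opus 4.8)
The key idea: $\bar d(T)$ is the worst-case TV distance between $\mathcal L(X^1(T))$ and $\mathcal L(X^2(T))$ for two arbitrary starting points. To bound it below $1$ by a constant gap, I need to show these two distributions overlap by a fixed fraction of mass, uniformly in the starting points. The natural vehicle is the set $W$ from \eqref{eq:wdef}: by Lemma \ref{lm:w_prob_bound}, each $w \in W$ is hit by the chain (from any fixed start) with probability $\ge \gamma_4/(\rho n)$, and under the favorable event \eqref{eq:farv} we have $|W| = \gamma_3 \rho n + O(n^{3/4}\sqrt{\log n})$. So from a single start, the chain puts mass $\ge \gamma_4/(\rho n) \cdot |W| \ge \gamma_3\gamma_4 - o(1)$ on $W$ at time $T$.

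That alone is not quite enough — I need a \emph{common} lower bound on the two distributions, i.e. $\sum_{w} \min(\mathcal L(X^1(T))(w), \mathcal L(X^2(T))(w)) \ge \gamma_5$, since $\|\mu - \sigma\|_{\rm TV} = 1 - \sum_x \min(\mu(x),\sigma(x))$. Here I exploit the fact that Lemma \ref{lm:w_prob_bound} gives the \emph{same} pointwise lower bound $\gamma_4/(\rho n)$ on $\mathbb P(X(T) = w)$ for every $w \in W$, irrespective of the starting vertex $X(0)$ (the set $W$ and the constant $\gamma_4$ do not depend on the start). Therefore
$$
\sum_{w \in W} \min\bigl(\mathcal L(X^1(T))(w),\, \mathcal L(X^2(T))(w)\bigr) \ge |W| \cdot \frac{\gamma_4}{\rho n} \ge \gamma_3\gamma_4 - o(1),
$$
which yields $\bar d(T) \le 1 - (\gamma_3\gamma_4 - o(1))$, and picking, say, $\gamma_5 = \gamma_3\gamma_4/2$ works for $n$ large enough. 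I would spell out that $\bar d(T) = \sup \|\mathcal L(X^1(T)) - \mathcal L(X^2(T))\|_{\rm TV}$ and use the coupling/overlap identity $\|\mu-\sigma\|_{\rm TV} = 1 - \sum_x \min(\mu(x),\sigma(x))$ to convert the pointwise lower bounds into the desired inequality.

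One subtlety I would address carefully: the definition of $L$ in the generating procedure of Section \ref{sec:diffusion} depends on $\lambda$, the distance of $X(0)$ from the nearest hub, so strictly speaking the value $L = T/2 - \lambda$ and hence the grid-point structure differs slightly between the two starts. But this only shifts $L$ by $O(1)$, which is absorbed into the $O(n)$ slack already carried in $L = \rho n^{3/2} + O(n)$ throughout Sections \ref{sec:grid}--\ref{sec:diffusion}; all the estimates — the $\mathbb P(E_r)$ bound \eqref{eq:PEr-bound}, the gap property \eqref{eq:farv}, and Lemma \ref{lm:w_prob_bound} — hold with constants uniform over such $O(1)$ perturbations, so the same $W$ (up to negligibly many vertices) and the same $\gamma_4$ serve both chains. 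I expect this bookkeeping — checking that $W$ and $\gamma_4$ are genuinely uniform in the starting point — to be the only real obstacle; the rest is the short overlap computation above.
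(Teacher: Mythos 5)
Your overall strategy (the overlap identity $\|\mu-\sigma\|_{\rm TV}=1-\sum_x\min(\mu(x),\sigma(x))$ plus the pointwise bound of Lemma \ref{lm:w_prob_bound} on $W$) is the paper's strategy, but there is a genuine gap in the step where you declare that ``the set $W$ and the constant $\gamma_4$ do not depend on the start.'' The constant $\gamma_4$ is indeed uniform in the starting vertex, but $W$ is not: by \eqref{eq:wdef}, $W$ is built from $V_1=g(R_1)$, and $R_1$ is determined by $L=T/2-\lambda$, where $\lambda$ is the number of steps from $X(0)$ to the hub ahead of it (and the relevant hub itself depends on which arc contains $X(0)$). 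For two arbitrary starting vertices, $\lambda$ differs by $\Theta(n)$, not $O(1)$; increasing $L$ by one shifts every exit point $g(r)$ by one vertex along the cycle (this is exactly the zigzag evolution used in Section \ref{sec:lowerbound}), so the two sets $W^1,W^2$ associated with the two chains are macroscopically shifted copies and can be far from identical. Hence your inequality $\sum_{w\in W}\min(\sigma^1(w),\sigma^2(w))\ge |W|\gamma_4/(\rho n)$ is not justified: each $\sigma^j$ is only known to be large on its own $W^j$, and you cannot sum the minimum over a single common $W$.

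The missing idea is precisely why Lemma \ref{lm:biggaps} insists on $\gamma_3>\tfrac12$: under \eqref{eq:farv} each $W^j$ has size $\gamma_3\rho n+O(n^{3/4}\sqrt{\log n})$, i.e.\ more than half the cycle (with $\rho=1$ here), so
\begin{equation*}
  |W^1\cap W^2|\;\ge\;|W^1|+|W^2|-n\;=\;(2\gamma_3-1)n+O\bigl(n^{3/4}\sqrt{\log n}\bigr)\;\ge\;\beta n
\end{equation*}
for some $\beta>0$ and $n$ large. Applying Lemma \ref{lm:w_prob_bound} separately to each chain, both $\sigma^1(w)$ and $\sigma^2(w)$ are at least $\gamma_4/n$ for every $w$ in this intersection, and your overlap computation then goes through on $W^1\cap W^2$, giving $\bar d(T)\le 1-\beta\gamma_4$. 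With this repair (replace ``the same $W$ serves both chains'' by the intersection bound exploiting $\gamma_3>1/2$), your argument coincides with the paper's proof.
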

\begin{proof}
  Fix two arbitrary starting vertices for $X^1(0)$ and $X^2(0)$ and
  denote the distribution of the two chains at time $T$ by
  $\sigma^1,\sigma^2$. Simple rearrangements yield
  \begin{equation}
    \label{eq:sigma12_TV_1}
    \begin{aligned}
    \|\sigma^1 - \sigma^2\|_{\rm TV} &= \frac 1 2 \sum_{v\in V}
    |\sigma^1(v) - \sigma^2(v)| = \frac 1 2 \sum_{v\in V} (\sigma^1(v) +
    \sigma^2(v) - 2\min(\sigma^1(v), \sigma^2(v)))\\
    &= 1 - \sum_{v\in V} \min(\sigma^1(v), \sigma^2(v)).
    \end{aligned}
  \end{equation}

  For both realizations in \eqref{eq:wdef} we get a subset of
  vertices, $W^1,W^2$, and there must be a
  considerable overlap, in particular
  $$
  |W^1\cap W^2| \ge |W^1|+|W^2|-n = (2\gamma_3 -1)n +
  O(n^{3/4}\sqrt{\log n}) \ge \beta n
  $$
  for some $\beta > 0$ and $n$ large enough, relying on the fact that $\gamma_3 > 1/2$.
  By Lemma \ref{lm:w_prob_bound} for any $w \in W^1\cap W^2$ we have
  both $\sigma^1(w),\sigma^2(w) \geq \gamma_4/n$. Substituting this
  back to \eqref{eq:sigma12_TV_1} we get
  \begin{equation*}
    \|\sigma^1 - \sigma^2\|_{\rm TV} \leq 1 - \beta n \frac{\gamma_4}{n} = 1 - \gamma_5,
  \end{equation*}
  with $\gamma_5 = \beta\gamma_4$. This upper bound applies for
  any two starting vertices of $X^1,X^2$, therefore the claim follows.
\end{proof}
We just need the final touch to prove Theorem \ref{thm:main_upper_bound}.
\begin{proof}[Proof of Theorem \ref{thm:main_upper_bound}]
Using Lemma \ref{lm:biggaps} we have a spread out collection in $V_2$
as stated in \eqref{eq:farv} with probability at least
$\gamma_2=:\gamma$. In this case we can apply Lemma \ref{lm:dbar_bound}. Fix
$$
T^* = \left\lceil \frac{\log \eps}{\log (1-\gamma_5)}\right\rceil T.
$$
Substituting \eqref{eq:dbar_bound} and using the basic properties of
$d,\bar{d}$ we get
$$
d(T^*)\le \bar{d}(T^*) \le \bar{d}(T)^{\left\lceil \frac{\log
      \eps}{\log (1-\gamma_5)}\right\rceil}
\le (1-\gamma_5)^{\left\lceil \frac{\log
   \eps}{\log (1-\gamma_5)}\right\rceil}
\le \eps.
$$
Consequently, $t_{\rm mix}(\eps) \le T^*$. On the other hand,
$$
T^* = \left\lceil \frac{\log \eps}{\log (1-\gamma_5)}\right\rceil T =
\left\lceil \frac{\log \eps}{\log (1-\gamma_5)}\right\rceil 2\lceil
n^{3/2}\rceil \le \gamma' n^{3/2} \log\frac{1}{\eps},
$$
for appropriate constant $\gamma'>0$ and $n$ large enough. Together with the
previous calculation this confirms the theorem.
\end{proof}

\section{Lower bound}
\label{sec:lowerbound}

In this section let us fix $\rho = \gamma_4/2$. Lemma
\ref{lm:w_prob_bound} tells that at elements of $W$ there is $2/n$
probability of the Markov chain to arrive, however, we have no information on the size
of $W$ for general $k$, for all edge selections, and there might be
significant overlaps, multiplicities when defining $V_1$ or $W$. It is
key for completing
the proof to be able to handle this.

  We know $|R_1|=\sqrt{\rho}n^{1/4} + O(1)$ but we have no size
  estimates on $|R_2|,|V_2|$. Let the set of interest be
  $$
  I = \{v\in V ~|~ |v-k|,|v-n| > 4\sqrt{\rho}n^{3/4}\sqrt{\log n}\}.
  $$
  For the moment we cannot guarantee that most of $R_1$ maps into $I$. In order to change this, we start moving time a bit.
  Let us define
  \begin{align*}
    T^i &= 2\lceil \rho n^{3/2} \rceil + 2i, \qquad i=0,1,\ldots,n-1,\\
    L^i &= \lceil \rho n^{3/2} \rceil - \lambda + i, \qquad
          i=0,1,\ldots,n-1.
  \end{align*}
  Accordingly, there is an evolution $R^i$ and $R_0^i$. Here $\lambda$
  corresponds to the position of $X(0)$ as before, which is arbitrary,
  but we consider it as fixed.

  For $R_1^i$ (together with $V_1^i$),
  we investigate this evolution from the perspective of the individual
  points. One can verify that a valid interpretation is that $r=(x,y,h)\in
  R_1^0$ performs a \emph{zigzag} away from the origin along the grid lines stopping just before
  $(x+1, y+1,h)$, turning at both grid points it passes along the way. During
  the move $|x'-y'|$ changes by at most 1, so there can be edge
  effects whether the point is allowed in $R_0^i$ or not, but only at the very ends
  of $R_1^i$. This process is illustrated in Figure \ref{fig:gridzigzag}.
  \begin{figure}[h]
    \centering
    \includegraphics[width=0.3\textwidth]{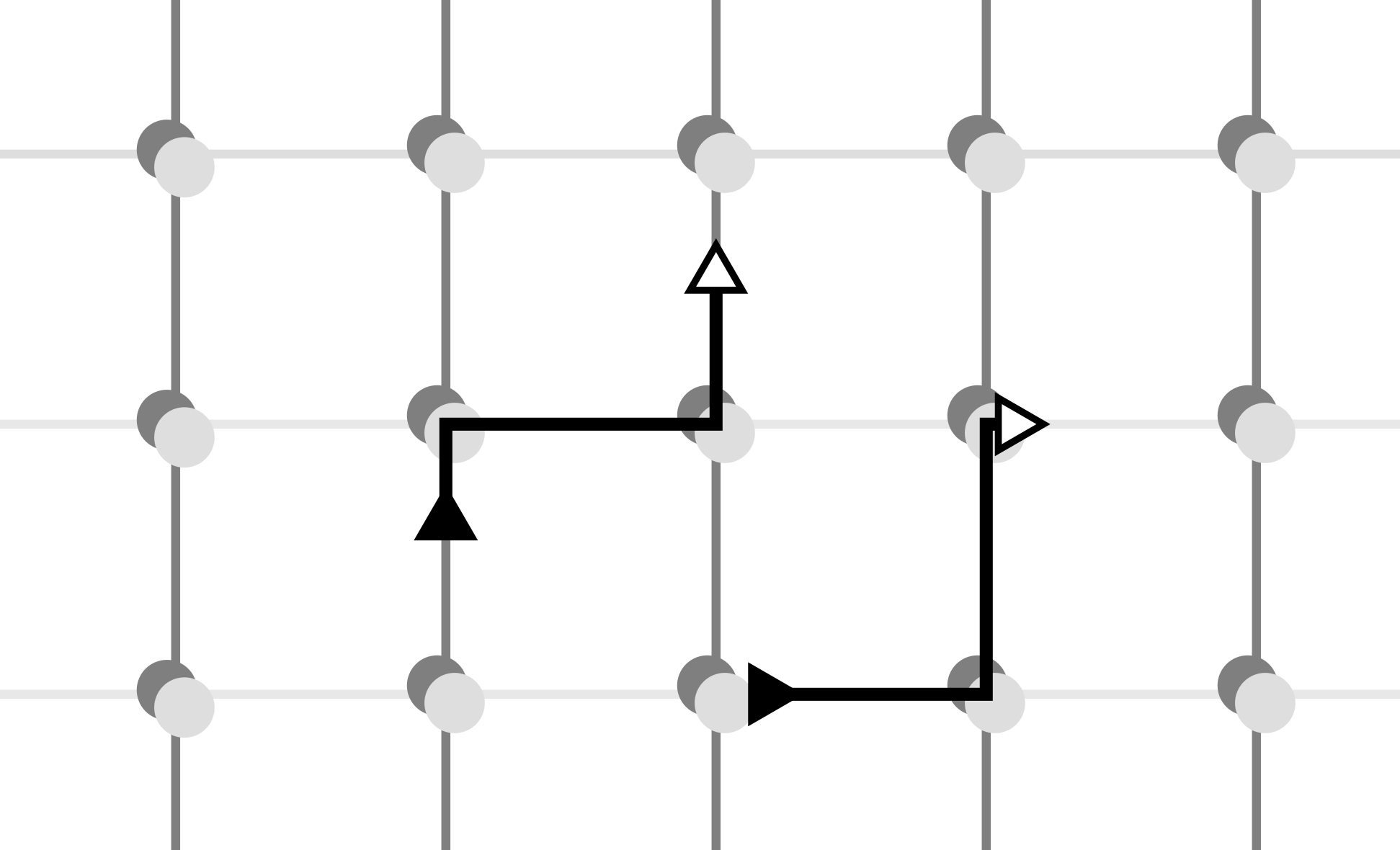}
    \caption{Evolution of the points of $R_1^i$ with the increase of
      $i$. Two such zigzag paths are depicted.}
    \label{fig:gridzigzag}
  \end{figure}
  Observe that such a zigzag corresponds to walking around the cycle
  exactly once. This will allow us to conveniently calculate the total number of hits to
  $I$ by $V_1^0,\ldots,V_1^{n-1}$. More precisely (to account for multiplicities), we are interested in
  $$
  M=\sum_{i=0}^{n-1} \sum_{r\in R_1^i} \m1_I (g(r)).
  $$
  By the previous argument we know that along each point the
  number of hits is
  exactly $I$,
  except maybe near the edges. Thus we get
  $$
  M \ge |R_1|\cdot |I| - O(n).
  $$
  Comparing the two while dividing by $n$ we get
  $$
  \frac{1}{n} \sum_{i=0}^{n-1} \sum_{r\in R_1^i} \m1_I (g(r))\ge |R_1|
  \frac{|I|}{n}-O(1)\ge \frac{|R_1|}{2}.
  $$
  Consequently, there has to be an index $i$ where the term on the left
  hand side being averaged out is larger than the right hand side:
  \begin{equation*}
  \sum_{r\in R_1^i} \m1_I (g(r)) \ge \frac{|R_1|}{2}.
  \end{equation*}
  Let us take and fix such an $i$ from now on. Define the
  analogous sets as before
  \begin{align*}
  R_2^i &= \{r\in R_1^i ~|~ g(r)\in I\},\\
  W^i &= \{w\in V ~|~ \exists r\in R_2^i,  |g(r)-w| \le \gamma_3\sqrt{\rho}n^{3/4}/2\}.
  \end{align*}
    
  We now formulate an extension of Lemma \ref{lm:w_prob_bound}:
  \begin{corollary}
    \label{cor:w_prob_bound}
    For any $r\in R_2^i$, $w\in W^i$ such that
    $|g(r)-w| \le \gamma_3\sqrt{\rho}n^{3/4}/2$ we have 
    \begin{equation*}
      \mP(X(T^i)=w, E_r) \ge \frac{\gamma_4}{\rho n},
    \end{equation*}
    for the same global constant $\gamma_4>0$ as before.
  \end{corollary}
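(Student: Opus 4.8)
The plan is to revisit the proof of Lemma \ref{lm:w_prob_bound} and track the probability of the joint event $\{X(T^i)=w\}\cap E_r$ rather than the marginal $\mP(X(T^i)=w)$. The point of Corollary \ref{cor:w_prob_bound} is precisely that the bound there, $\mP(X(T)=w)\ge\gamma_4/(\rho n)$, was obtained as a product $\mP(X(T)=w\mid E_r)\mP(E_r)$ in the last display of that proof, so the intermediate estimate $\mP(X(T^i)=w, E_r)\ge\gamma_4/(\rho n)$ is already there --- we just keep it before discarding the conditioning. First I would set up the coin-toss generation procedure of Section \ref{sec:diffusion} with the shifted parameters $T^i, L^i$ in place of $T,L$; since $T^i = 2\rho n^{3/2}+O(n)$ and $L^i=\rho n^{3/2}+O(n)$ are within the same orders as before, Lemma \ref{lm:tauT} applies verbatim (the event $\mathcal{B}$ has probability $O(n^{-4})$), so $\mP(\tau=T^i)=1-O(n^{-4})$ for $r\in R_2^i$.

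Next I would condition on $E_r$ and on a track $\xi$, then on the inserted sequence $c_h$, exactly as before. Because $r\in R_2^i$ means $g(r)\in I$, the point $v=g(r)$ is at distance more than $4\sqrt{\rho}n^{3/4}\sqrt{\log n}$ from both hubs, which is the only property of $V_2$ actually used in the proof of Lemma \ref{lm:w_prob_bound}; hence the independence of $c_h$ from $c_0$ on $\bar{\mathcal B}$ and the concentration arguments go through unchanged. Given $c_h$, the number $s$ of required $1$'s in $c_0$ satisfies $|s-\rho n^{3/2}|\le\gamma_3\sqrt{\rho}n^{3/4}/2+O(n^{1/2})$ because $|g(r)-w|\le\gamma_3\sqrt{\rho}n^{3/4}/2$, and $|c_0|=T^i-(x'+y'+1)=T^i+O(\sqrt n)$. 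The Stirling/Spencer estimate for ${|c_0|\choose s}2^{-|c_0|}$ then yields the same lower bound $\beta_1/(\sqrt{\rho}n^{3/4})$, and subtracting $\mP(\mathcal B)=O(n^{-4})$ gives $\mP(X(T^i)=w\mid E_r,\xi,c_h)\ge\beta_2/(\sqrt{\rho}n^{3/4})$. Averaging over $\xi$ and $c_h$ preserves the bound for $\mP(X(T^i)=w\mid E_r)$.

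Finally I would multiply by $\mP(E_r)$. Here the subtlety to check is that $r\in R_2^i\subseteq R_1^i\subseteq R_0^i$, so the simplified estimate \eqref{eq:PEr-bound}, $\mP(E_r)\ge\gamma_1/(\sqrt{\rho}n^{1/4})$, is available for the shifted grid as well (its proof only used $r\in R_0$ with the corresponding $\rho$). Combining,
\begin{equation*}
\mP(X(T^i)=w, E_r) \ge \mP(X(T^i)=w\mid E_r)\,\mP(E_r) \ge \frac{\beta_2}{\sqrt{\rho}n^{3/4}}\cdot\frac{\gamma_1}{\sqrt{\rho}n^{1/4}} = \frac{\beta_2\gamma_1}{\rho n} \ge \frac{\gamma_4}{\rho n},
\end{equation*}
for $n$ large enough, which is the claim. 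The only thing that genuinely requires care --- and hence the main (mild) obstacle --- is confirming that none of the ingredients of Lemma \ref{lm:w_prob_bound} secretly relied on $V_2$ beyond its ``far from the hubs'' defining property or on the specific value $\rho=1$ (unlike Section \ref{sec:mixproof}, here $\rho=\gamma_4/2$); once that bookkeeping is done, the corollary is an immediate re-reading of the existing proof with shifted time parameters.
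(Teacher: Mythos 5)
Your proposal is correct and takes essentially the same route as the paper: the paper's own proof is a one-line remark that the corollary is exactly the intermediate estimate $\mP(X(T)=w\mid E_r)\mP(E_r)\ge\gamma_4/(\rho n)$ inside the proof of Lemma \ref{lm:w_prob_bound}, with the conditioning on $E_r$ simply not discarded, and your write-up is that same observation with the shifted-parameter bookkeeping made explicit. The only nitpick is that, by the paper's own remark on edge effects, $R_1^i$ need not sit entirely inside $R_0^i$ (the deviation $|x'-y'|$ can change by $1$), but this does not affect the CLT-type bound \eqref{eq:PEr-bound}, so your argument stands.
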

  \begin{proof}
    This is actually what is happening under the hood in the proof of
    Lemma \ref{lm:w_prob_bound}, just there the choice of $r$ is given by
    the structure $W$ and the extra knowledge on $E_r$ is discarded at the end.
  \end{proof}

  \begin{proof}[Proof of Theorem \ref{thm:main_lower_bound}]
    We want to bound $d(T^i)$, with the $i$ carefully chosen
    above. 
    With only analyzing the case of the starting point fixed above
    we get a lower bound on
    $d(T^i)$. We need to estimate the total variation distance:
    $$
    \left\|\cL(X(T^i))-\frac{\m1}{n}\right\|_{\rm TV} = \frac 1 2 \sum_{v\in
      V}\left|P(X(T^i)=v)-\frac 1 n \right|.
    $$
    Note that for any $w\in W^i$ there is a corresponding $r$ nearby
    to apply Corollary \ref{cor:w_prob_bound}, and with the choice of
    $\rho$ at the beginning of the section we get $P(X(T)=w)\ge
    2/n$. That is, these terms in the sum are positive without the
    absolute value. We drop other values for a lower bound. Thus we get
    $$
    \left\|\cL(X(T^i))-\frac{\m1}{n}\right\|_{\rm TV} \ge \frac 1 2 \sum_{w\in
      W^i}\left(P(X(T^i)=w)-\frac 1 n\right)\ge \frac 1 2 \sum_{r,w} \frac 2 n -
    \frac 1 2 |W^i| \frac 1 n.
    $$
    In the first term we may include all compatible pairs of $r,w$ for
    which Corollary \ref{cor:w_prob_bound} can be applied. Recall
    that $|R^i_2| \ge \sqrt{\rho}n^{1/4}/2 + O(1)$, each element
    compatible with
    $\gamma_3 \sqrt{\rho}n^{3/4} + O(1)$ number of $w$'s.

    For the second term being subtracted, we may count very similarly
    starting from $R^i_2$ and looking for compatible pairs. This time
    however, multiplicity does not add up as we need the size of the set, but we want an upper bound
    for this term anyway. In total we get
    $$
    \left\|\cL(X(T^i))-\frac{\m1}{n}\right\|_{\rm TV} \ge
    \frac 1 n |R^i_2| \left({\gamma_3}{\sqrt{\rho}} n^{3/4} + O(1)\right) - 
    \frac 1 {2n} |R^i_2| \left({\gamma_3}{\sqrt{\rho}} n^{3/4} + O(1)\right) \ge
    \frac {\gamma_3\rho} 4 + O(n^{-1/4}).
    $$

    As $d(\cdot)$ is non-increasing,
    for the constants of the theorem we may choose $\gamma^*=2\rho=\gamma_4$
    and any $\eps^*<\gamma_3\rho/4 = \gamma_3\gamma_4/8$. With such choices the previous calculations show
    that the claim holds.
    
\end{proof}

\section{Discussion, conclusions}
\label{sec:discussion}

Let us further elaborate on the results obtained together with ideas for
possible extensions.

First of all, Theorem \ref{thm:main_upper_bound}
provides a global probability estimate for the mixing time bound to hold
but is not an a.a.s.\ result. It is unclear if this is the result of our
bounds being conservative, or because truly there is a large proportion
of badly behaving $k$. Note that there \emph{are} bad $k$, for
instance if $n$ is even then for $k=n/2$ the mixing time is truly
quadratic in $n$.

Concerning the proof technique, observe that the grid together with
the grid lines correspond to a covering of the original graph which is
well suited for our purpose.

An interesting question is whether there is an extension possible for
slightly more connections. The more natural one is to increase the
number of random edges. In this case however, one might need to handle
the effects of the small permutations generated by the various edges.
The more accessible one is to increase the
number of random hubs, then adding all-to-all connections between
them. Closely related work has been done for the asymptotic rate of
convergence \cite{gb_jh:rmplusclique2015} when then number of hubs can grow at any
small polynomial rate, and it turns out that in that case the inverse
spectral gap is linear in
the length of the arcs (excluding logarithmic factors), which would be a bottleneck anyway.

Still, we might guess the mixing time in a heuristic manner for constant number of hubs,
with the generous assumption that our concepts can be carried
over. Let us consider $K$ random hubs -- and thus also $K$ arcs -- and check mixing until
$n^\alpha$ for some $\alpha$. We can assume the lengths of the arcs
are of order $n$, so the set generalizing $R$ is on a $K-1$ dimensional
hyperplane at distance $\approx n^{\alpha-1}$ from the origin. We can hope to get a
CLT type control on the probability in a ball of radius $\approx
n^{(\alpha-1)/2}$ again, thus the number of such points in the hyperplane is $\approx
n^{(K-1)(\alpha-1)/2}$. If once again these map to far away points on
the cycle and this movement can be nicely blended together with the
diffusion, that would provide an extra factor of $\approx n^{\alpha/2}$, meaning
the total number of vertices reached is
$$
\approx n^{(K-1)\frac{\alpha-1}{2}+\frac{\alpha}{2}}
=n^{\frac{K}{2}(\alpha-1)+\frac 1 2}.
$$
We hope for mixing when the exponent reaches 1 which translates to
$\alpha = 1 + 1/K$, and leads us to the following conjecture:
\begin{conjecture}
  Consider $K\in \mZ_+, \eps>0$ fixed. On a cycle of $n$ vertices, choose $K$
  hubs randomly. With an appropriate interconnection structure among
  the hubs, and a Markov chain otherwise analogous to the one before
  there is a positive bounded probability to have
  $$
  t_{\rm mix}(\eps) = \Theta\left(n^{1+1/K}\right),
  $$
  for $n$ large enough.
\end{conjecture}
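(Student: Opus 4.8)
\subsection*{Towards a proof of the conjecture}

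The plan is to run the entire argument of Sections~\ref{sec:grid}--\ref{sec:lowerbound} with the single extra edge (equivalently, two arcs $A,B$) replaced by a cluster of $K$ hubs and $K$ arcs. Fix arc lengths $\ell_1,\dots,\ell_K$ with $\sum_j \ell_j = n$; for a uniformly random choice of the $K$ hub positions each $\ell_j$ is of order $n$. Replace $\mZ_+^2\times H$ by $\mZ_+^K\times H$ with $H=\{1,\dots,K\}$ indexing the arcs: a position $(x_1,\dots,x_K,h)$ records that arc $j$ has been traversed $x_j$ times and the chain currently sits on arc $h$, with grid lines again translated to interior vertices of arcs. Choose the track length $L \approx \rho\, n^{1+1/K}$, so that on the grid it corresponds to $\approx K\rho\, n^{1/K}$ arc-traversals, and let $R$ be the intersection of the hyperplane $\{\sum_j \ell_j x_j = L\}$ with the grid lines, a $(K-1)$-dimensional family. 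The near-diagonal subset $R_0$ of points whose coordinates all lie within $\approx \sqrt{\rho}\, n^{1/(2K)}$ of their common value $\approx \rho\, n^{1/K}$ is a discretized $(K-1)$-ball of radius $\approx n^{1/(2K)}$, hence $|R_0| \approx n^{(K-1)/(2K)}$, recovering $n^{1/4}$ when $K=2$.

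First I would redo the probability estimates of Section~\ref{sec:grid}. The successive arc-choices along a track form a finite Markov chain on $H$ driven by the interconnection structure of the hubs, so $\mP(E_r)$ is again a sum over admissible paths which reassembles into a transfer-matrix/multinomial expression in place of the Binomial convolution \eqref{eq:binom-decomposition}. Applying a multidimensional local central limit theorem for additive functionals of finite Markov chains, together with the unimodality/log-concavity type input used to turn a uniform bound on the density into a pointwise lower bound, should give $\mP(E_r) \gtrsim n^{-(K-1)/(2K)}$ uniformly over $R_0$, the exact analogue of \eqref{eq:PEr-bound}. The technical cost here is the non-homogeneity of the summands, the behaviour near the boundary of the quadrant, and extracting a genuine lower bound rather than just a Gaussian approximation; none of this is conceptually new, but it is more involved than the one-dimensional Berry--Esseen argument.

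The heart of the proof, and the step I expect to be the main obstacle, is the analogue of Section~\ref{sec:gridmap}: controlling the image $V_1 = g(R_1)$ on the cycle for a fixed-size subset $R_1\subseteq R_0$ of size $\approx n^{(K-1)/(2K)}$. As in the scan through $R_1$, moving from one grid point to the next changes the cycle position by a fixed displacement that is an integer combination $\sum_j m_j \ell_j$ of the arc lengths, generalising the ``jump by $-k$'' of the present paper. One must show that for a positive-probability set of hub placements the $\approx n^{(K-1)/(2K)}$ points of $V_1$ are pairwise separated on the cycle by $\Omega\bigl(n^{(K+1)/(2K)}\bigr)$, which is exactly the separation needed so that $|V_1|$ many diffusive windows of width $\approx n^{(K+1)/(2K)}$ can tile the cycle. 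For $K=2$ this reduces to the estimate $\sum_{m}\varphi(m)/m \sim (6/\pi^2)n$ of Lemma~\ref{lm:biggaps}; for $K\ge 3$ the clean one-parameter picture disappears and one instead needs a higher-dimensional equidistribution/anticoncentration statement bounding how often the linear forms $\sum_j m_j \ell_j$ are abnormally small modulo $n$ as $(m_j)$ ranges over a box, for random $\ell_j$. Finding the right ``appropriate interconnection structure among the hubs'' --- ideally one keeping the kernel doubly stochastic and making the hub-exit law close to uniform over arcs, so that the combinatorics above stays symmetric --- is part of this step.

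Given the two ingredients above, the rest follows the template of Sections~\ref{sec:diffusion}--\ref{sec:lowerbound} with mechanical changes of exponents. Set $T \approx 2\rho\, n^{1+1/K}$; the waiting-time bookkeeping of Lemmas~\ref{lm:tauT} and~\ref{lm:w_prob_bound} (Hoeffding concentration of the number of moves, Stirling asymptotics for the relevant Binomial coefficient) shows that $X(T)$ lands within the diffusive window, of half-width $\approx \gamma_3\sqrt{\rho}\,n^{(K+1)/(2K)}/2$, of each of the $\approx n^{(K-1)/(2K)}$ predicted centres with probability $\gtrsim 1/(\rho n)$ per target vertex; summing over the $\approx n$ vertices so covered gives $\bar d(T)\le 1-\gamma_5$ and then, by submultiplicativity, $t_{\rm mix}(\eps)=O\bigl(n^{1+1/K}\log(1/\eps)\bigr)$. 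For the matching lower bound one averages over shifted times $T^i = 2\lceil \rho n^{1+1/K}\rceil + 2i$ as in Section~\ref{sec:lowerbound} to locate an $i$ for which enough of the shifted $R_1^i$ maps into the region away from the hubs, and then lower-bounds $\|\cL(X(T^i))-\frac{\m1}{n}\|_{\rm TV}$ by the surplus mass carried on the still-clumped reachable set, yielding $d(T^i)\ge\eps^*$ and hence $t_{\rm mix}(\eps^*)=\Omega(n^{1+1/K})$. A secondary difficulty throughout is that the sojourn-time-versus-direction dependence at a $K$-hub cluster is messier than at the two-hub pair, so the decomposition of the coin-toss sequence into a direction-independent bulk $c_0$ and an inserted part $c_h$ must be set up with more care.
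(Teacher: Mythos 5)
There is a genuine gap here, and it is worth being explicit about its nature: the statement you are addressing is stated in the paper as a \emph{conjecture}, supported only by the heuristic dimension count in Section~\ref{sec:discussion} (hyperplane of dimension $K-1$, CLT ball of radius $\approx n^{(\alpha-1)/2}$, diffusive factor $n^{\alpha/2}$, exponent $1$ at $\alpha=1+1/K$). Your proposal reproduces exactly that heuristic, with the correct exponent bookkeeping ($|R_0|\approx n^{(K-1)/(2K)}$, windows of width $\approx n^{(K+1)/(2K)}$, $T\approx 2\rho n^{1+1/K}$), and then defers precisely the steps that prevent it from being a theorem. The decisive one is the analogue of Lemma~\ref{lm:biggaps}. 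For $K=2$ the separation of $V_1$ rests on a very specific one-parameter structure: consecutive points of $R_1$ differ by a single fixed jump $-k$ on the cycle, so the bad set of $k$ is a union of Farey-type intervals whose total measure is controlled by $\sum_m \varphi(m)/m$. For $K\ge 3$ there is no canonical linear scan of the $(K-1)$-dimensional set $R_1$, consecutive points are not related by one fixed displacement but by varying integer combinations $\sum_j m_j\ell_j$, and what you would need is a positive-probability anticoncentration statement for all these linear forms modulo $n$ simultaneously, over a box of multi-indices, for random $(\ell_1,\dots,\ell_K)$. You name this as ``the main obstacle'' but give no argument for it, and it is exactly the missing ingredient; without it the covering of a constant fraction of the cycle by the diffusive windows, hence the bound $\bar d(T)\le 1-\gamma_5$, does not follow.

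Two further steps are asserted rather than established. The pointwise lower bound $\mP(E_r)\gtrsim n^{-(K-1)/(2K)}$ on $R_0$ is not a routine consequence of a multidimensional local CLT: in the paper the lower bound is extracted from the Berry--Esseen estimate via log-concavity of the one-dimensional Binomial convolution, and you would need a genuine multidimensional local limit theorem (with a lower bound, not just an $L^\infty$ approximation) for the Markov-additive vector of arc counts, whose proof you do not supply. Finally, the ``appropriate interconnection structure among the hubs'' is part of the conjecture's own statement; choosing it so that the kernel stays doubly stochastic, the hub sojourn times decouple from the exit direction well enough for the $c_0/c_h$ decomposition, and the exit law is symmetric enough for the transfer-matrix computation, is a design task you acknowledge but leave open. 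So the proposal is a reasonable research program, consistent with the paper's heuristic, but it does not prove the statement --- which is consistent with the paper itself offering no proof.
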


Clearly there is quite some flexibility left for the class of Markov
chains to consider. This remains as a question for future research to
find out exactly what is needed to generalize the current toolchain.

\bibliographystyle{siam}
\bibliography{mcmt,ringmixing,current}

\end{document}